\documentclass[12pt]{amsart}

\makeatletter

\def\chaptermark#1{}

\def\chapter{%
  \if@openright\cleardoublepage\else\clearpage\fi
  \thispagestyle{plain}\global\@topnum\z@
  \@afterindenttrue \secdef\@chapter\@schapter}

\def\@chapter[#1]#2{\refstepcounter{chapter}%
  \ifnum\c@secnumdepth<\z@ \let\@secnumber\@empty
  \else \let\@secnumber\thechapter \fi
  \typeout{\chaptername\space\@secnumber}%
  \def\@toclevel{0}%
  \ifx\chaptername\appendixname \@tocwriteb\tocappendix{chapter}{#2}%
  \else \@tocwriteb\tocchapter{chapter}{#2}\fi
  \chaptermark{#1}%
  \addtocontents{lof}{\protect\addvspace{10\p@}}%
  \addtocontents{lot}{\protect\addvspace{10\p@}}%
  \@makechapterhead{#2}\@afterheading}
\def\@schapter#1{\typeout{#1}%
  \let\@secnumber\@empty
  \def\@toclevel{0}%
  \ifx\chaptername\appendixname \@tocwriteb\tocappendix{chapter}{#1}%
  \else \@tocwriteb\tocchapter{chapter}{#1}\fi
  \chaptermark{#1}%
  \addtocontents{lof}{\protect\addvspace{10\p@}}%
  \addtocontents{lot}{\protect\addvspace{10\p@}}%
  \@makeschapterhead{#1}\@afterheading}
\newcommand\chaptername{Chapter}

\def\@makechapterhead#1{\global\topskip 7.5pc\relax
  \begingroup
  \fontsize{\@xivpt}{18}\bfseries\centering
    \ifnum\c@secnumdepth>\m@ne
      \leavevmode \hskip-\leftskip
      \rlap{\vbox to\z@{\vss
          \centerline{\normalsize\mdseries
              \uppercase\@xp{\chaptername}\enspace\thechapter}
          \vskip 3pc}}\hskip\leftskip\fi
     #1\par \endgroup
  \skip@34\p@ \advance\skip@-\normalbaselineskip
  \vskip\skip@ }
\def\@makeschapterhead#1{\global\topskip 7.5pc\relax
  \begingroup
  \fontsize{\@xivpt}{18}\bfseries\centering
  #1\par \endgroup
  \skip@34\p@ \advance\skip@-\normalbaselineskip
  \vskip\skip@ }
\def\appendix{\par
  \c@chapter\z@ \c@section\z@
  \let\chaptername\appendixname
  \def\thechapter{\@Alph\c@chapter}}

\newcounter{chapter}

\newif\if@openright

\makeatother

\usepackage{amsmath}
\usepackage{tikz-cd}
\usepackage{mathtools}
\usepackage{amsfonts}
\usepackage{graphicx}
\usepackage{verbatim}
\usepackage{textcomp}\usepackage{amssymb}
\usepackage{cite}
\usepackage{color}
\usepackage[all]{xy}
\usepackage{graphicx}
\usepackage{color}
\usepackage{hyperref}
\usepackage{soul}
\usepackage[normalem]{ulem}
\hypersetup{
    colorlinks,
    citecolor=black,
    filecolor=black,
    linkcolor=black,
    urlcolor=black
}
\usepackage{tikz}
\usepackage[all]{xy}
\usepackage{graphicx}
\usetikzlibrary{backgrounds}
\setlength{\oddsidemargin}{.25in} 
\setlength{\evensidemargin}{.25in}
\setlength{\textwidth}{6in}
\hfuzz2pt

\newtheorem{theorem}{Theorem}[section]
\newtheorem{lemma}[theorem]{Lemma}

\newtheorem{corollary}[theorem]{Corollary}

\allowdisplaybreaks
\newtheorem{notation}[theorem]{Notation}

\usepackage{amsthm}

\DeclareMathOperator*{\essinf}{ess\,inf}

\theoremstyle{definition}
\newtheorem{definition}[theorem]{Definition}

 \newenvironment{proof1.2}
{{\it Proof of  Theorem~\ref{uniqueness}.}}
{\hfill $\Box$ \\}

\numberwithin{equation}{section}


\newcommand{\R}{\mathbb R}
\newcommand{\Z}{\mathbb Z}
\newcommand{\RR}{\mathcal R}

\newcommand{\C}{\mathbb C}
\newcommand{\D}{\mathbb D}
\newcommand{\Sp}{\mathbb S}

\newcommand{\B}{\mathcal B}

\newcommand{\HH}{\mathcal H}

\newcommand{\LL}{\mathcal L}

\newcommand{\domain}{M}

\newcommand{\DDD}{\widetilde{\bar \D^*}}
\newcommand{\DDDD}
{\bar \D^* \rightarrow \DDD \times_{\rho} \tilde X}

\newcommand{\Ej}{\mathsf E_j}

\voffset -.5in
\title{Infinite energy harmonic maps from Riemann surfaces to CAT(0) spaces}

\author[Daskalopoulos]{Georgios Daskalopoulos}
\address{Department of Mathematics \\
                 Brown Univeristy \\
                 Providence, RI}
\email{daskal@math.brown.edu}
\author[Mese]{Chikako Mese}
\address{Johns Hopkins University\\
Department of Mathematics\\
Baltimore, MD}
\email{cmese@math.jhu.edu}

\begin{document}

\thanks{
GD supported in part by NSF DMS-2105226, CM supported in part by NSF DMS-2005406.}
\maketitle

\begin{abstract}
We present some results about harmonic maps with possibly infinite energy from punctured Riemann surfaces to CAT(0) spaces.  In particular, we give precise estimates of their  energy growth   near the punctures and prove  uniqueness.  
\end{abstract}

\vspace*{0.2in}

{\it Key words:} harmonic maps, uniqueness, CAT(0), Riemann surface, infinite energy

\vspace*{0.2in}
\begin{center}
{\sc Table of Contents}
\end{center}

\begin{itemize}

\item[] \S\ref{introduction} Introduction 
	\hfill p.~\pageref{introduction}
\item[] \S\ref{preliminaries} Preliminaries 
	\hfill p.~\pageref{preliminaries}
\item[] \S\ref{disc} Harmonic maps from a punctured disk 
	\hfill p.~\pageref{disc}
\item[] \S\ref{sec:proofexistence} Existence of infinite energy harmonic maps 	
	\hfill p.~\pageref{sec:proofexistence}
\item[] \S\ref{sec:proofuniqueness} Uniqueness of infinite energy harmonic maps	\hfill p.~\pageref{sec:proofuniqueness}
\end{itemize}

\section{Introduction} \label{introduction}

 This is the first in a series of papers where we develop the techniques to study non-abelian Hodge theory on quasi-projective varieties. The sequel is
 \cite{daskal-meseQP} and \cite{ya}.
 More specifically, in this first paper, we  study harmonic maps with possibly infinite energy from punctured Riemann surfaces into  complete CAT(0) spaces and we a give precise estimate of their  energy growth  near the punctures. We also prove  a uniqueness result.  Both results are crucial for the next papers.

Throughout this paper, we use the following notation unless explicitly stated otherwise:
\begin{itemize}
\item $\tilde X$ is a complete CAT(0)-space
\item $\bar \RR$ is a compact Riemann surface 
\item $\RR=\bar \RR \backslash \{p^1, \dots, p^n\}$ is a punctured Riemann surface
\item $\Pi:\tilde \RR \rightarrow \RR$ is the universal covering map
\item $\rho:\pi_1(\RR) \rightarrow \mathsf{Isom}(\tilde X)$ is a homomorphism.
\end{itemize}
For each  puncture $p^j$, $j=1, \dots, n$, 
\begin{itemize}
\item $\D^j$ is a conformal disks in $\RR$ centered at the  puncture $p^j$ ($\D^j \cap \D^i =\emptyset$ for $i\neq j$)
\item $\D^j_r=\{z \in \D:  |z|<r\}$ 

\item $\lambda^j \in \pi_1(\RR)$ is the conjugacy class associated to the loop in $\RR$ around $p^j$,
\item $I^j:=\rho(\lambda^j) \in \mathsf{Isom}(\tilde X)$, and 
\item 
$\Delta_{I^j}: =\inf_{P \in \tilde X} d(I^j(P), P)$ is the translation length of $I^j$.
\end{itemize}
Recall that  $I^j$ is said to be  {\it semisimple}  if there exists $P_* \in \tilde X$ such that $d(I^j(P_*),P_*)=\Delta_{I^j}$.

We prove the following two theorems.

\begin{theorem}[Existence] \label{existence}
Assume
\begin{itemize}
 \item[(A)] The action of $\rho(\pi_1(\RR))$ does not fix a point at infinity.
    \item[(B)] The isometry $I^j$  is semisimple or  there exist a  geodesic ray $c:[0,\infty) \rightarrow \tilde X$ and constants $a,b>0$ such that
\[
d^2(I^j(c(t)), c(t)) \leq  \Delta_{I^j}^2(1+be^{-at}).
\]
 \end{itemize}
 Then there exists a $\rho$-equivariant  harmonic map $u:   \tilde \RR \rightarrow \tilde X$  with logarithmic energy growth towards the punctures, i.e.
 \begin{equation} \label{Cbddef}
\sum_{j=1}^n
\frac{\Delta^2_{I^j}}{2\pi} \log \frac{1}{r} \leq E^u[\RR_r] \leq \sum_{j=1}^n \frac{\Delta^2_{I^j}}{2\pi} \log \frac{1}{r}
\end{equation}
where $
\RR_r  = \RR \backslash \bigcup_{j=1}^n \D^j_r
$
and $E^u[\RR_r]$ is the energy of $u$ in $\RR_r$.
\end{theorem}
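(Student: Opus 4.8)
The plan is to construct the harmonic map by exhausting $\RR$ with an increasing sequence of compact subsurfaces with boundary, solving a Dirichlet problem on each, and then passing to the limit. Concretely, fix a point $P_0 \in \tilde X$ and for each $i$ let $\RR_{1/i} = \RR \backslash \bigcup_j \D^j_{1/i}$; on the preimage $\tilde \RR_{1/i} \subset \tilde \RR$ solve the $\rho$-equivariant Dirichlet problem with boundary values given by a fixed (locally Lipschitz) equivariant map $\phi$ along the boundary circles. By the theory of Korevaar--Schoen and Gromov--Schoen for CAT(0) targets, each such problem has a unique energy-minimizing solution $u_i$; these restrict to harmonic maps on the interior. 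To take a limit I need uniform interior energy bounds and uniform local Lipschitz bounds on compact subsets of $\tilde \RR$, which then yield local uniform convergence of a subsequence to a $\rho$-equivariant harmonic map $u: \tilde \RR \to \tilde X$.

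The core of the argument is to produce the comparison maps that pin down the energy growth \eqref{Cbddef}, and this is where hypothesis (B) enters. On the model punctured disk $\D^*$ with the isometry $I^j$, one builds an explicit equivariant "model map" into $\tilde X$: if $I^j$ is semisimple with axis/minimizing point $P_*$, translate along the geodesic through $P_*$ at logarithmic speed — that is, on $\D^*_r$ in coordinates $z = e^{-(s + i\theta)}$ send $(s,\theta) \mapsto$ the point at parameter $\frac{\theta}{2\pi}\Delta_{I^j}$ (suitably arranged so $\theta \mapsto \theta + 2\pi$ acts by $I^j$), whose energy on $\D^*_r$ is exactly $\frac{\Delta_{I^j}^2}{2\pi}\log\frac 1r$ plus a bounded term. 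In the non-semisimple case, the exponentially-decaying estimate $d^2(I^j(c(t)),c(t)) \le \Delta_{I^j}^2(1 + be^{-at})$ on the geodesic ray $c$ lets one build a comparison map with the same leading-order energy and a convergent error integral $\int b e^{-at}\,dt < \infty$. Using these as comparison maps on the ends gives the upper bound $E^u[\RR_r] \le \sum_j \frac{\Delta_{I^j}^2}{2\pi}\log\frac 1r + C$; the matching lower bound follows from the standard fact (via the monotonicity of $r \mapsto E^u[\D^*_r] / \log\frac 1r$ type estimates, or a direct comparison using that the energy on an annulus dominates $\frac{\Delta_{I^j}^2}{2\pi}\log$ of the modulus because any equivariant map must "wrap around" with displacement at least $\Delta_{I^j}$) applied on each end.

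The role of hypothesis (A), that $\rho(\pi_1(\RR))$ fixes no point at infinity, is to prevent the minimizing sequence from escaping to infinity in $\tilde X$: it gives the compactness/properness needed so that the $u_i$ do not drift off, i.e. it yields a uniform bound on $d(u_i(x), P_0)$ on compact sets (equivalently, it rules out the degeneration where energy stays bounded but the images run to the ideal boundary). This is typically packaged as: either a subsequence converges, or one extracts a fixed point at infinity, contradicting (A). Combined with the uniform energy bounds from the comparison maps, this secures the limit.

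The main obstacle I expect is the analysis at the punctures — specifically, upgrading the "finite energy on each $\RR_{1/i}$" picture to the sharp two-sided growth rate \eqref{Cbddef} uniformly in $i$, and ensuring the limiting map genuinely has the claimed asymptotics rather than merely a bound. This requires the precise local study of harmonic (and comparison) maps on the punctured disk, presumably the content of Section~\ref{disc}; in particular one needs that the energy density of $u$ on $\D^*_r$ is governed by the translation length $\Delta_{I^j}$ with an error that is integrable near $r=0$, and the non-semisimple case is delicate precisely because there is no honest minimizing geodesic, so the exponential-decay hypothesis (B) is doing essential work to make the error terms summable. A secondary technical point is handling isometries $I^j$ that are elliptic or have $\Delta_{I^j}=0$, where the model map is constant to leading order and one must check the construction still closes up equivariantly.
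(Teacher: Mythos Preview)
Your proposal is correct and follows essentially the same approach as the paper: construct a prototype comparison section on each end using (B), solve the equivariant Dirichlet problem on the exhaustion $\RR_r$ with these as boundary values, use the comparison together with the wrap-around lower bound (Lemma~\ref{lowerbd}) to get uniform local energy bounds, invoke (A) for properness to prevent escape to infinity, and pass to a subsequential limit with lower semicontinuity of energy. The one technical point you underspecify is the construction of the comparison map in the non-semisimple case: the paper parametrizes the family of geodesic segments $\gamma_s$ along the ray $c$ by $s=(-\log r - \log 2)^{1/3}$ rather than $s=-\log r$, so that the \emph{radial} energy $\int |\partial v/\partial r|^2\,r\,dr\,d\theta$ is finite --- the naive parametrization would give $|\partial v/\partial r|\sim 1/r$ and a divergent radial integral, even though the angular error $\int be^{-as}\,ds$ you mention does converge.
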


For smooth targets, the existence  essentially follows from \cite{lohkamp}, \cite{wolf} and \cite{koziarz-maubon}.  The new statement in Theorem~\ref{existence} is the precise growth estimate towards the punctures of the harmonic map.  

We also prove the following uniqueness result.  

\begin{theorem}[Uniqueness] \label{uniqueness}
Assume (A) and (B) of Theorem~\ref{existence}.  If the $\rho$-equivariant harmonic map $\tilde u: \tilde \RR \rightarrow \tilde X$ has logarithmic energy growth (cf.~(\ref{Cbddef})), then it is the  unique harmonic map with logarithmic energy growth in the following cases: 
\begin{itemize}
\item[(i)] $\tilde X$ is a negatively curved space (i.e.~for any $P \in \tilde X$, there exists a neighborhood $U$ of $P$ which is $CAT(-\kappa)$ for some $\kappa>0$),\item[(ii)] $\tilde X$ is an irreducible symmetric space of non-compact type, or
\item[(iii)] 
$\tilde X$ is an irreducible locally finite Euclidean building such that the action of $\rho(\pi_1(\RR))$ does not fix an unbounded closed convex strict subset  of $\tilde X$.
\end{itemize}
 \end{theorem}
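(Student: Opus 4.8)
Let $\tilde v:\tilde\RR\to\tilde X$ be a second $\rho$-equivariant harmonic map with logarithmic energy growth in the sense of \eqref{Cbddef}; the plan is to show $\tilde v=\tilde u$. Put $f:=d(\tilde u,\tilde v)$. Since $\rho$ acts on $\tilde X$ by isometries and both maps are $\rho$-equivariant, $f$ is $\pi_1(\RR)$-invariant and so descends to a function, still denoted $f$, on $\RR$. By convexity of the distance function on a $\mathrm{CAT}(0)$ space together with the harmonicity of $\tilde u$ and $\tilde v$, $f$ is subharmonic on $\RR$.

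First I would control $f$ near the punctures; here the analysis of \S\ref{disc} is the essential input. That analysis shows that a $\rho$-equivariant harmonic map from a punctured disk realizing the sharp logarithmic growth \eqref{Cbddef} is asymptotic near $p^j$ to an explicit model map determined by $I^j$ --- built from the minimal set of $I^j$ along its translation direction when $I^j$ is semisimple, and from the geodesic ray of assumption (B) otherwise. Two such model maps differ only by a parallel translation, so $d(\tilde u,\tilde v)$ stays bounded on a punctured neighborhood of each $p^j$. Hence $f$ is a bounded subharmonic function on $\RR$; by the removable-singularity theorem it extends to a subharmonic function on the compact surface $\bar\RR$, and the maximum principle forces $f\equiv c$ for a constant $c\ge 0$.

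It remains to rule out $c>0$; this is the heart of the matter, and it is where (A) and the hypotheses (i)--(iii) enter. In case (i), the local $\mathrm{CAT}(-\kappa)$ assumption promotes subharmonicity of $f$ to \emph{strict} subharmonicity on the set where the energy density of $\tilde u$ (equivalently of $\tilde v$) is positive, so constancy of $f$ with $c>0$ forces both energy densities to vanish identically; then $\tilde u$ and $\tilde v$ are constant, and two distinct fixed points of $\rho$ would make $\rho$ fix the geodesic line through them together with its two endpoints at infinity, contradicting (A), so $c=0$. In cases (ii) and (iii) I would use instead the rigidity in the \emph{equality} case of the subharmonicity of $f$: when $f\equiv c>0$ the geodesic segments from $\tilde u(\tilde x)$ to $\tilde v(\tilde x)$ extend --- using geodesic completeness of symmetric spaces and Euclidean buildings --- to a family of mutually parallel geodesic lines, so that, taking $P$ to be the $\rho$-invariant closed convex hull of $\tilde u(\tilde\RR)\cup\tilde v(\tilde\RR)$, $P$ splits canonically as $\R\times Z$, both maps take values in $P$, and $\tilde v$ is the translate of $\tilde u$ by $(c,0)$. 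Equivariance then forces $\rho$ to preserve this splitting and to act on the $\R$-factor either by a reflection, which forces $c=0$ outright, or by translations, which makes $\rho$ fix the two ends of the $\R$-factor and hence two points of $\partial_\infty\tilde X$, contradicting (A). This leaves only $P=\tilde X$: for an irreducible symmetric space of non-compact type this cannot occur, while for an irreducible Euclidean building it would force $\tilde X$ to be flat, again contrary to irreducibility --- the remaining degenerate configurations being excluded by the hypothesis of (iii) that $\rho$ fix no unbounded closed convex strict subset of $\tilde X$. In every case $c=0$, so $\tilde u=\tilde v$.

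I expect the principal obstacle to be twofold. The boundedness of $f$ near the punctures genuinely relies on the full strength of the asymptotic estimates of \S\ref{disc}: it is the sharpness of \eqref{Cbddef} that pins the two competing harmonic maps to the same model asymptotics. And in cases (ii) and (iii), making the equality-case rigidity precise and combining it with the structure theory of parallel sets in symmetric spaces and Euclidean buildings is the delicate part --- this is also the only place where irreducibility and the extra hypothesis of (iii) are really used.
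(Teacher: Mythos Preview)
Your reduction to $d(\tilde u,\tilde v)\equiv c$ is essentially the paper's Lemma~\ref{sh01}, though your justification (``asymptotic to the same model map'') is not what is actually proved in \S\ref{disc}; the paper instead uses the decay estimates of Lemma~\ref{(ii)} to obtain sub-logarithmic growth (Lemma~\ref{(iii)}) for each map separately, and then Lemma~\ref{shdisk} and the triangle inequality give the extension across the punctures. This part is repairable.

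The real gap is what comes next. In the finite energy setting, $d\equiv c$ together with $E^{u_0}=E^{u_1}$ and convexity of energy forces the interpolation $u_s$ to have constant energy, hence equal pull-back tensors $|(u_s)_*(V)|^2\equiv |(u_0)_*(V)|^2$; it is this \emph{second} conclusion that drives the rigidity in all three cases. Here the energies are infinite, so that step is unavailable, and you have skipped the paper's key substitute: the modified energy $E^f(0)=\lim_{r\to 0}\bigl(E^f[\RR_r]-\sum_j\Ej\log\tfrac{1}{r}\bigr)$ and the bridge construction of Lemma~\ref{sae} showing $E^{u_0}(0)=E^{u_s}(0)$, which then yields Lemma~\ref{twothings}. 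Without the pull-back equality your case (ii)--(iii) outline does not go through: from $d\equiv c$ alone the quadrilaterals $u_0(p)u_1(p)u_1(q)u_0(q)$ have two opposite sides equal to $c$, but you need $d(u_0(p),u_0(q))=d(u_1(p),u_1(q))$ as well to get the flat strips and the product splitting you invoke; that is precisely what \cite[Sections~3.2--3.3]{daskal-meseUnique} consume. Your case (i) argument is also incorrect as stated: take the domain $\R$, target $\mathbb H^2$, $u_0=\gamma$, $u_1=\gamma(\cdot+c)$ for a unit-speed geodesic $\gamma$; then $d(u_0,u_1)\equiv c>0$ in a $\mathrm{CAT}(-1)$ space while both energy densities equal $1$. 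The correct conclusion in negative curvature is that the image lies in a geodesic line (as the paper obtains via \cite{mese} after Lemma~\ref{twothings}), not that the maps are constant.
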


 In the case when the harmonic map has finite energy,  uniqueness  follows from previous works of the authors and Corlette (\cite{mese}, \cite{corlette}, \cite{daskal-meseUnique} for cases (i), (ii), (iii) respectively).
In  \cite{daskal-meseQP},   we use the results of this paper to develop  non-Abelian Hodge theory techniques extending the works by Gromov-Schoen \cite{gromov-schoen} from the finite to the infinite energy case and Mochizuki \cite{mochizuki-memoirs} from the Archimedean case to the non-Archimedean case.  The upcoming joint paper \cite{ya} contains an application of this technique to non-Abelian Hodge theory.\\

{\it Summary of the paper}.  For the purpose of explaining the main ideas,  assume that $\tilde X$ is a universal cover of a compact manifold $X$ and let  $\gamma:\Sp^1 \rightarrow X$ be  an arc length parameterization of a minimizing closed geodesic in $X$.  Denote the energy of $\gamma$ by $E^\gamma$.   A prototype map $v$  corresponding to $\gamma$ is 
\[
v: [0, \infty)  \times \Sp^1 \rightarrow X
\]
which is  equal to $\gamma$ on slices $\{t\} \times \Sp^1$ for  all sufficiently large $t$.   (We  note that the idea of the prototype map first appears in \cite{lohkamp}.) 
The energy of $v$ on the finite cylinder $[0,T] \times \Sp^1$ is bounded from above by $T \cdot E^\gamma +C$ where $C$ is a constant independent of $T$.  Let 
\[
u_T:[0,T] \times \Sp^1 \rightarrow X
\]
be the Dirichlet solution on the finite cylinder $[0,T] \times \Sp^1$  with boundary values equal to $v$.  Since $u_T$  is an energy minimizing map, the energy of $u_T$ is bounded from above by $T \cdot E^\gamma +C$.   Since $\gamma$ is also an energy minimizing map, the energy of $u_T$ in any finite cylinder $[0,t] \times \Sp^1$ for $0< t \leq T$ is bounded from below by $t \cdot E^\gamma$.  From this, we conclude that the energy of $u_T$ on any finite cylinder $[t_1,t_2] \times \Sp^1$ has an upper  bound independent of $T$; namely,  $(t_2-t_1) \cdot E^\gamma + C$.
 Thus,  the regularity theory for harmonic maps implies  that the family of maps $\{u_T\}_{T\geq 1}$ has a uniform Lipschitz bound on any compact subset of $[0,\infty) \times \Sp^1$.  This implies  that there exists a sequence $T_i \rightarrow \infty$ such that  $u_{T_i}$ converges locally uniformly to a harmonic map $u:[0,\infty) \times \Sp^1 \rightarrow  X$. 
 The conformal equivalence of the punctured disk $\D^*$ and the infinite cylinder $[0,\infty) \times \Sp^1$ thus defines a harmonic map on $\D^*$ 
 with lower and upper energy bound 
 \begin{equation} \label{lu}
 E^\gamma \log \frac{1}{r} \leq E^u[\D_r] \leq E^\gamma \log \frac{1}{r} +C.
 \end{equation}
  We extend this  idea  to prove the existence of  equivariant  harmonic maps from any punctured Riemann surface $\Sigma$.  
 
 For  finite energy harmonic maps, the uniqueness can be derived from the fact that the energy functional is a convex function along a geodesic interpolation.  Namely, if $u_t$ is the geodesic interpolation map (i.e.~$t \mapsto u_t(x)$ is a geodesic), then $E^{u_t} \leq (1-t) E^{u_0} + t E^{u_1}$. If $u_0$ and $u_1$ are energy minimizing maps, then the convexity implies that $t \mapsto E^{u_t}$ is a constant function.  This idea does not directly generalize to our situation because we deal with infinite energy maps. The main idea to prove the uniqueness assertion in our setting is to introduce a modified energy functional by subtracting off the logarithmic energy growth near the punctures.  We then prove  that the modified energy  is constant  along the geodesic interpolation of harmonic maps. This allows us to apply the  argument used by the authors in \cite{mese},  \cite{daskal-meseUnique}.

\section{Preliminaries} \label{preliminaries}

\subsection{Maps into CAT(0) spaces}
\label{subsec:korevaarschoen}

\begin{definition} \label{def:NPC}
A say that $\tilde X$ is a {\it complete CAT(0) space} if it is a complete geodesic space that satisfies the 
 following condition:
  For any three points $P,R,Q \in \tilde X$ and an arclength parameterized geodesic $c:[0,l]
\rightarrow \tilde X$ with $c(0)=Q$ and $c(l)=R$, 
\[
d^2(P,Q_t) \leq (1-t) d^2(P,Q)+td^2(P,R)-t(1-t)d^2(Q,R)
\]
where $Q_{t}=c(tl)$.  (We refer to \cite{bridson-haefliger} for more details.)
\end{definition}

 \begin{notation} \label{interpolationnotation}
\emph{It follows immediately from Definition~\ref{def:NPC} that, given $P, Q \in \tilde X$ and $t \in [0,1]$, there exists a {\it unique} point with distance from $P$ equal to $td(P,Q)$ and the distance from $Q$ equal to $(1-t)d(P,Q)$.  We denote this point by
\[
(1-t)P+tQ.
\]
}
\end{notation}

Fix a smooth conformal Riemannian metric $g$ on $\RR$ (i.e.~$g$ is given by $\rho(z)\, dz d\bar z$ for some smooth function $\rho$ in any holomorphic coordinate $z$ of $\RR$).  We refer to \cite{korevaar-schoen1} for the notion of energy  $E^f$ and energy density function $|\nabla f|^2$ of a map $f:\RR \rightarrow \tilde X$. 
The (Korevaar-Schoen) energy of $f$ in a domain $\Omega \subset \RR$ is 

\[
E^f[\Omega] =
\int_{\Omega}|\nabla f|^2 d\mbox{vol}_g.
\]
 It is well-known that $E^f[\Omega]$ and $|\nabla f|^2 d\mbox{vol}_g$ is invariant of the choice of the smooth conformal Riemannian metric $g$.  

We say a continuous map $u: \RR \rightarrow \tilde X$  is {\it harmonic} if it is locally energy minimizing;   more precisely, at each $p \in \RR$, there exists a neighborhood $\Omega$ of $p$  so that all continuous comparison maps which agree with $u$ outside of this neighborhood have no less energy.

For  $V \in \Gamma \Omega$ where $\Gamma \Omega$ is the set of
Lipschitz vector fields  on $\Omega$, $|f_*(V)|^2$  is similarly
defined.  The real valued $L^1$ function $|f_*(V)|^2$ generalizes
the norm squared on the directional derivative of $f$.  The
generalization of the pull-back metric is the continuous, symmetric, bilinear, non-negative and tensorial operator
\[
\pi_f(V,W)=\Gamma \Omega \times \Gamma \Omega \rightarrow
L^1(\Omega, {\bf R})
\]
where
\[
\pi_f(V,W)=\frac{1}{2}|f_*(V+W)|^2-\frac{1}{2}|f_*(V-W)|^2.
\]
  We refer to \cite{korevaar-schoen1} for more
details.

Fix a local chart   $(U; z)$.  By the conformal invariance of the energy,    we can assume that the metric $g$ the Eucliean metric  $dzd\bar z$ in $U$.  We extend $\pi_f$ linearly cover $\C$.
Then energy density function of
$f$ is given by
\[
\frac{1}{4}|\nabla f|^2=  \pi_f(\frac{\partial f}{\partial z^i},
\frac{\partial f}{\partial \bar z^j}).
\]
Furthermore, set $z=re^{i\theta}$ to define polar coordinates $(r,\theta)$.  We define 
\[
 \left| \frac{\partial u}{\partial r} \right|^2:=  \pi_f(\frac{\partial }{\partial r},
\frac{\partial }{\partial r}) \ \mbox{ and } \ \left| \frac{\partial u}{\partial \theta} \right|^2:=  \pi_f(\frac{\partial }{\partial \theta},
\frac{\partial }{\partial \theta}).
\]

\subsection{Equivariant maps and section of the flat $\tilde X$-bundle} \label{subsec:donaldsonsection}
\begin{definition} \label{def:equivariant} Let $\rho: \pi_1(M) \rightarrow \mathsf{Isom}(\tilde X)$. A map $\tilde f:\tilde \domain \rightarrow  \tilde X$ is said to be $\rho$-equivariant if 
\[
\tilde f(\gamma p) = \rho(\gamma) \tilde f(p), \ \ \forall \gamma \in \pi_1(M), \ p \in \tilde \domain.
\]
\end{definition}
  
 Following Donaldson  \cite{donaldson},  we will replace equivariant   maps with  sections of an associated fiber bundle. 
The quotient under the action of $\pi_1(M)$ on the product $\tilde M \times \tilde X$  is  the {\it twisted product}
\[
\tilde M \times_\rho \tilde X.
\]
In other words, $\tilde M \times_\rho \tilde X$ is the set of orbits  $[(p, x)]$ of a point $(p,x) \in  \tilde M \times  \tilde X$  under the action of $\gamma \in \pi_1(M)$ via   the deck transformation  on the first component and the isometry $\rho(\gamma)$ on the second component.  The fiber bundle
\[
\tilde M \times_\rho \tilde X \rightarrow M
\]
with  fibers over $p \in M$ is isometric to $\tilde X$ is the {\it flat $\tilde X$-bundle} over $M$ defined by $\rho$.

There is a one-to-one correspondence between sections of this fibration   and $\rho$-equivariant maps 
\[
 \tilde f:  \tilde M \rightarrow \tilde X 
 \ \ \ \ \longleftrightarrow \ \ \ \ 
f: M  \rightarrow \tilde M \times_{\rho} \tilde X
 \]
satisfying the relationship  
\[
[(\tilde p, \tilde f(\tilde p))] \leftrightarrow f(p)
\ \mbox{
 where $\Pi(\tilde p) =p$.}
 \]
Since the energy density function  $|\nabla \tilde f|^2$ of $\tilde f$ is a $\rho$-invariant function, we can define
\[
|\nabla f|^2(p):=|\nabla \tilde f|^2(\tilde p). 
\]

We can similarly define the pullback inner product and directional energy density functions of $f$  by using the corresponding notions for $\tilde f$ given in Subsection~\ref{subsec:korevaarschoen}.
For $U \subset M$, 
the (vertical) energy of a section $f$ is 
\begin{equation} \label{vertical}
E^f[U] = \int_U |\nabla f|^2 d\mbox{vol}_g.
\end{equation}
Furthermore,
for   sections $f_1$, $f_2$, we define
\begin{equation} \label{defdis}
d(f_1(p),f_2(p)):=d(\tilde f_1(\tilde p), \tilde f_2(\tilde p))
\end{equation}
where  $\tilde f_1$,  $\tilde f_2$ are  the associated  $\rho$-equivariant maps  to sections $f_1$, $f_2$ respectively.

\subsection{Sublogarithmic growth}

We will
\begin{equation}  \label{fixeddisk}
\mbox{fix a conformal disk $\D^j \subset \bar \RR$ centered at each puncture $p^j$}
\end{equation}
such that $\D^i \cap \D^j=\emptyset$ for $i\neq j$.  Furthermore, let $$\D^{j*}=\D^j \backslash \{0\}$$.

Fix $P_0 \in \tilde X$ and a fundamental domain $F$ of $\tilde \RR$.  Let  $f_0$ be the section of the fiber bundle $\tilde \RR \times_\rho \tilde X \rightarrow \RR$  such that, for any $p \in \RR \cap \Pi(F)$,   $f_0(p)=[(\tilde p,P_0)]$ where $\tilde p = \Pi^{-1}(p) \cap F$.  (Note that  $\Pi(F)$ is of full measure in $\RR$.) 

For a given section $f:\RR \rightarrow \RR \times_\rho \tilde X$,  define
\begin{equation} \label{tildeyes}
\delta_j:\D^{j*} \rightarrow [0,\infty), \ \ \ \ \delta_j(z) = \essinf_{\{  z \in \D^{j*}\}} d( f(z),f_0(z)).
\end{equation}
Recall that $d( f(z),f_0(z))$ is defined by (\ref{defdis}).
\begin{definition} \label{logdec} 
We say  a section $f:\RR \rightarrow \tilde \RR \times_\rho \tilde X$  (or its associated equivariant map) has \emph{sub-logarithmic growth} if for any $j=1, \dots, n$
\[
\lim_{|z| \rightarrow 0} \delta_{j}(z) +\epsilon \log |z|=-\infty \mbox{ in }\D^{j*} .
\]
By the triangle inequality, this definition is independent of the choice of  $P_0 \in \tilde X$. We say that $f$ has {\it logarithmic energy growth} if near the punctures satisfies condition (\ref{Cbddef}) in Theorem~\ref{existence}. 
 \end{definition}

\section{Harmonic maps from a punctured disk}\label{disc}

Before we consider the domain to be a general punctured Riemann surface, we first start by studying harmonic maps from a punctured disk $\bar \D^*$.  Denote 
\begin{eqnarray*}
\D_r^*& = & \{z \in \D: |z|<r\}
\\
\D_{r,r_0} & = & \D_{r_0} \backslash \D_r.
\end{eqnarray*}
Identify the boundary of the disk with the circle; i.e.
\[
\partial \D=\Sp^1,
\]
This induces a natural identification  
\[
2\pi \Z \simeq \pi_1(\Sp^1) \simeq  \pi_1( \bar \D^*). 
\]
The main result of this section is the following.

\begin{theorem}[Existence and Uniqueness of the Dirichlet solution on $\D^*$] \label{exists}
Assume the following:
\begin{itemize}
\item $\rho:2\pi \Z \simeq \pi_1(\Sp^1) \simeq  \pi_1( \bar \D^*) \rightarrow \mathsf{Isom}(\tilde X)$  
is a homomorphism

 \item $k:\DDDD$ is a locally Lipschitz section
 \item $I:=\rho([\Sp^1])$ satisfy condition (B) of Theorem~\ref{existence} with $I^j$ replaced by $I$.
  \end{itemize}
Then there  exists a  harmonic section  
\[
u:\DDDD \mbox{  with  }u|_{\Sp^1} =k|_{\Sp^1}.
\]
  Furthermore,  
there exists a constant $C>0$   that depends only on  $\mathsf{E}_\rho=\frac{\Delta_I^2}{2\pi}$,  $a$, $b$ from (B) if $I$ is not semisimple and the section $k$ satisfying the following properties:\\

\begin{itemize}
\item[(i)]
$\displaystyle{\mathsf{E}_\rho \log \frac{1}{r} \leq E^u[\D_{r,1}] \leq \mathsf{E}_\rho \log \frac{1}{r}+C, \ \ \ 0<r\leq 1}$
\item[(ii)]  $\displaystyle{\left| \frac{\partial u}{\partial r} \right|^2 \leq \frac{C}{r^2(-\log r)} \ \  \mbox{ and } \ \ 
\left( \left| \frac{\partial u}{\partial \theta} \right|^2 - \frac{\mathsf{E}_\rho}{2\pi} \right)   \leq \frac{C}{-\log r}
}$ \ in \  $\D_{\frac{1}{4}}^*$
 \item[(iii)]  $u$ has sub-logarithmic growth. 
\end{itemize}
Moreover, $u$ is the only harmonic section satisfying $u|_{\partial \D} =k|_{\partial \D}$ and property (iii).
\end{theorem}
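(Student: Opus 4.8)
The plan is to prove existence together with the bounds (i) by exhausting $\bar\D^*$ by annuli and passing to a limit of Dirichlet solutions, following the prototype idea of \cite{lohkamp}. First I would use condition (B) of Theorem~\ref{existence} (or the axis of $I$ when $I$ is semisimple) to fix a geodesic ray $c$ realizing the translation length of $I$ up to the stated exponential error, and build a locally Lipschitz $\rho$-equivariant section $v$ which, on the conformal cylinder $[0,\infty)\times\Sp^1$ with coordinate $t=-\log|z|$, sends the slice $\{t\}\times\Sp^1$ to the loop $\theta\mapsto$ (the point at parameter $\theta/2\pi$ on the geodesic from $c(\tau(t))$ to $I(c(\tau(t)))$), where $\tau$ is a slow reparametrization (e.g.\ $\tau(t)=\tfrac{2}{a}\log(t+2)$ in the non-semisimple case) chosen so that $|v_t|^2$ and the displacement error are integrable in $t$, and which equals $k$ near $\Sp^1$. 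A direct computation gives $E^v[\D_{r,1}]\le \mathsf{E}_\rho\log\tfrac{1}{r}+C$ with $C$ depending only on $\mathsf{E}_\rho$, on $k$, and (if $I$ is not semisimple) on $a,b$. For $0<r<1$ let $u_r$ be the Korevaar--Schoen energy minimizer on $\D_{r,1}$ with boundary values $v$. Minimality gives $E^{u_r}[\D_{r,1}]\le\mathsf{E}_\rho\log\tfrac{1}{r}+C$, while applying Cauchy--Schwarz to the restriction of $u_r$ to each circle $\{|z|=\sigma\}$ — a loop in the free homotopy class of $I$, hence of length $\ge\Delta_I$ — gives $E^{u_r}[\D_{\sigma,\sigma'}]\ge\mathsf{E}_\rho\log\tfrac{\sigma'}{\sigma}$ for $r\le\sigma<\sigma'\le1$. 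Subtracting, the energy of $u_r$ on any fixed subannulus of $\bar\D^*$ is bounded independently of $r$, so the interior and boundary Lipschitz regularity for harmonic maps into CAT(0) spaces makes $\{u_r\}$ uniformly locally Lipschitz on $\bar\D^*\setminus\{0\}$; a diagonal subsequence converges locally uniformly to a harmonic section $u$ with $u|_{\Sp^1}=k|_{\Sp^1}$. Lower semicontinuity of the energy together with the circle lower bound pass to the limit and give exactly (i).

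For the fine estimates work on the cylinder, where the flat metric is $dt^2+d\theta^2$ and $|\nabla u|^2=|u_t|^2+|u_\theta|^2$. By (i) the renormalized energy $\mathsf e(t):=E^u[\D_{e^{-t},1}]-\mathsf{E}_\rho t$ lies in $[0,C]$, and since $\int_{\Sp^1}|u_\theta|^2(t,\cdot)\,d\theta\ge\mathsf{E}_\rho$ for every $t$ it is nondecreasing; hence $\mathsf e$ converges, which yields
\[
\int_0^\infty\!\!\int_{\Sp^1}|u_t|^2\,d\theta\,dt<\infty \qquad\text{and}\qquad \int_0^\infty\!\Big(\int_{\Sp^1}|u_\theta|^2\,d\theta-\mathsf{E}_\rho\Big)dt<\infty .
\]
The Hopf differential $\Phi\,dz^2$ of $u$ is holomorphic away from the puncture; the energy bound forces $z^2\Phi$ to be bounded, i.e.\ $\Phi$ has at most a double pole, and comparison with the lower bound $\int_{\Sp^1}|u_\theta|^2\ge\mathsf{E}_\rho$ pins its leading coefficient to a fixed multiple of $\Delta_I^2$. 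Expanding $z^2\Phi$ in its Fourier--Laurent series on the cylinder, together with the integrated decay above, gives $|u_t|^2\to0$ and $|u_\theta|^2\to\mathsf{E}_\rho/2\pi$ with the rates $\le C/(-\log r)$ claimed in (ii) (one cannot expect faster rates in general because of the $e^{-at}$ slack in (B)). Finally, integrating the bound $|u_t|\le C(-\log|z|)^{-1/2}$ over $t$ shows that $u$ drifts at most $O(\sqrt{-\log|z|})$ from a fixed value near the puncture, so the function $\delta$ of Definition~\ref{logdec} satisfies $\delta(z)=o(-\log|z|)$; that is (iii).

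For uniqueness, let $u_0,u_1$ be harmonic sections with $u_i|_{\partial\D}=k|_{\partial\D}$ and sub-logarithmic growth (one of them being the section $u$ just constructed). Since $\tilde X\times\tilde X$ with the product metric is again complete CAT(0) and the distance $d_{\tilde X}$ is a convex function on it, and $(u_0,u_1)$ is a harmonic section into $\tilde X\times\tilde X$, the function $\ell(z):=d(u_0(z),u_1(z))$ is subharmonic on $\bar\D^*$, continuous, nonnegative, and vanishes on $\partial\D$. By the triangle inequality $\ell(z)\le\delta^{u_0}(z)+\delta^{u_1}(z)+O(1)$, so Definition~\ref{logdec} gives $\ell(z)=o(-\log|z|)$. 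A nonnegative subharmonic function on the punctured disk with growth $o(-\log|z|)$ at the puncture has a removable singularity, hence extends to a subharmonic function on $\D$; the maximum principle then forces $\ell\le\max_{\partial\D}\ell=0$, so $\ell\equiv0$ and $u_0\equiv u_1$. (This is the punctured-disk instance of the general strategy: in the sequel, where there is no boundary, one instead shows that a modified energy — the energy with the logarithmic growth subtracted off — is constant along the geodesic interpolation $(1-s)u_0+su_1$, and then runs the argument of \cite{mese}.)

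The step I expect to be the main obstacle is the precise estimate (ii): upgrading the soft two-sided bound (i) to genuine pointwise control of $|u_t|$ and $|u_\theta|$ near the puncture, which must tie together the holomorphicity of the Hopf differential, the monotonicity of the renormalized energy, and — in the non-semisimple case — the propagation of the $e^{-at}$ error from (B) through the exhaustion limit. A second delicate point, in the existence step, is obtaining bounds on the energy of $u_r$ over subannuli that are \emph{independent of the exhaustion parameter} $r$; this rests entirely on the interplay between the upper bound coming from minimality and the prototype and the lower bound coming from the length of the circles, and it is precisely what makes the limit $u$ harmonic with the sharp growth (i). By contrast, once sub-logarithmic growth is imposed on competitors, the uniqueness reduces to the removable-singularity argument above and is comparatively soft.
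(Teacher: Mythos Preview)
Your treatment of existence with the energy bound (i), of (iii), and of uniqueness is essentially the paper's argument: build a prototype section from the ray in (B), exhaust by annuli, use that every circle has $\theta$-energy at least $\mathsf E_\rho$ to get the lower bound and the prototype to get the upper bound, extract a limit; then integrate the radial derivative bound to get $O(\sqrt{-\log|z|})$ drift for (iii); and for uniqueness apply the Phragm\'en--Lindel\"of/removable-singularity argument to the subharmonic function $d(u_0,u_1)$ (the paper uses $d^2$, but this is immaterial). The only cosmetic difference is the choice of reparametrization of the ray ($\tau(t)=\tfrac{2}{a}\log(t+2)$ versus the paper's $\tau(t)=(t-\log 2)^{1/3}$); both make $|v_t|^2$ and the displacement error integrable in $t$.

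The gap is in (ii). The Hopf differential only controls the \emph{difference} $|u_t|^2-|u_\theta|^2$ (and the cross term), not $|u_t|^2$ and $|u_\theta|^2$ separately. Knowing that $\Phi_{\mathrm{cyl}}$ is bounded and hence converges exponentially to a constant tells you that $G(t)-F(t)\to 0$ exponentially, where $G(t)=\int_{\Sp^1}|u_t|^2\,d\psi$ and $F(t)=\int_{\Sp^1}|u_\theta|^2\,d\psi-\mathsf E_\rho$; combined with integrability of $F,G\ge 0$ this pins the leading coefficient as you say, but it does \emph{not} force $F(t),G(t)\to 0$ individually, let alone give the pointwise rate $C/t$. ``Together with the integrated decay above'' is exactly the step that is missing.

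What the paper does instead is use that on the flat cylinder the \emph{individual} directional energy densities $|u_t|^2$ and $|u_\theta|^2$ are subharmonic (Korevaar--Schoen). Integrating this over the circle makes $F$ and $G$ convex on $(0,\infty)$; convex, nonnegative, and integrable forces them to be nonincreasing with $tF(t)\le 2\int_{t/2}^\infty F$ and likewise for $G$. Then the mean value inequality for the subharmonic densities over unit balls in the cylinder converts this into the pointwise bounds $|u_\theta|^2-\tfrac{\mathsf E_\rho}{2\pi}\le C/t$ and $|u_t|^2\le C/t$, which is (ii), together with the $o(1/t)$ limits used in (iii). If you want to salvage a Hopf-flavored argument, note that $|u_t|^2=\tfrac12|\nabla u|^2+\tfrac12(|u_t|^2-|u_\theta|^2)$ is subharmonic (subharmonic plus harmonic) and similarly for $|u_\theta|^2$, so you can recover exactly the paper's input from holomorphicity of $\Phi$ plus subharmonicity of $|\nabla u|^2$; but some form of the subharmonicity of each directional density is unavoidable here, and the Fourier--Laurent expansion of $\Phi$ alone does not deliver it.
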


\begin{proof}
Combine Lemma~\ref{(i)}, Lemma~\ref{(ii)}, Lemma~\ref{(iii)} and Lemma~\ref{uniquenessdisk} below.
\end{proof}
The purpose for the rest of this subsection is to prove the lemmas that comprise the proof of Theorem~\ref{exists}.   
We start with the following preliminary lemma about subharmonic functions.

\begin{lemma} \label{shdisk}
For $r>0$, let $\nu: \D_r^* \rightarrow \R$ be a  subharmonic function that extends as a continuous function to $\bar \D_r^*= \overline{\D_r} \backslash \{0\}$. If we assume that 
\begin{equation} \label{slowblowup}
\lim_{z \rightarrow 0} \  \nu(z) + \epsilon \log |z| =-\infty, \ \ \forall \epsilon>0,
\end{equation}
 then 
 \[
\sup_{z \in \D_r^*} \nu(z)  \leq \max_{\zeta \in \partial \D_r} \nu(\zeta).
\] 
In particular, $\nu$ extends to a subharmonic function on $\D_r$ (cf.~\cite{hayman-kennedy}).
 \end{lemma}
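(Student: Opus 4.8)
The plan is to remove the singularity at $0$ by comparing $\nu$ with a small multiple of $\log|z|$, and then invoke the ordinary maximum principle for subharmonic functions on the punctured disk. First I would fix $\epsilon > 0$ and consider the function $\nu_\epsilon(z) := \nu(z) + \epsilon \log|z|$ on $\D_r^*$. Since $\log|z|$ is harmonic on $\D_r^*$, the function $\nu_\epsilon$ is again subharmonic there, and by hypothesis \eqref{slowblowup} it satisfies $\nu_\epsilon(z) \to -\infty$ as $z \to 0$. The idea is that this strong decay at the puncture lets us treat $0$ as if it were a boundary point where $\nu_\epsilon = -\infty$, so the maximum of $\nu_\epsilon$ over $\bar\D_r^*$ is attained on $\partial \D_r$.

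The key step is to make this rigorous via an exhaustion argument. For $0 < s < r$, apply the maximum principle for subharmonic functions on the annulus $\D_{s,r} = \overline{\D_r} \setminus \D_s$ (a compact domain with boundary $\partial\D_s \cup \partial\D_r$, on which $\nu_\epsilon$ is continuous up to the boundary):
\[
\sup_{z \in \D_{s,r}} \nu_\epsilon(z) \leq \max\Big\{ \max_{\zeta \in \partial\D_s} \nu_\epsilon(\zeta), \ \max_{\zeta \in \partial\D_r} \nu_\epsilon(\zeta) \Big\}.
\]
Because $\nu_\epsilon(z) \to -\infty$ uniformly as $|z| \to 0$, there is $s_0 > 0$ such that for all $s < s_0$ the term $\max_{\partial\D_s} \nu_\epsilon$ is dominated by $\max_{\partial\D_r}\nu_\epsilon$; hence for every $z \in \D_r^*$, taking $s < \min\{|z|, s_0\}$ gives $\nu_\epsilon(z) \leq \max_{\zeta \in \partial\D_r} \nu_\epsilon(\zeta) = \max_{\zeta \in \partial\D_r}\big(\nu(\zeta) + \epsilon\log r\big) \leq \max_{\zeta \in \partial\D_r}\nu(\zeta) - \epsilon\log\frac{1}{r}$. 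Unwinding the definition of $\nu_\epsilon$ and rearranging yields $\nu(z) \leq \max_{\zeta\in\partial\D_r}\nu(\zeta) - \epsilon\log\frac{1}{r} - \epsilon\log|z| \le \max_{\zeta\in\partial\D_r}\nu(\zeta) - \epsilon\log\frac{|z|}{r}$. Now let $\epsilon \to 0^+$ for fixed $z$: since $\log\frac{|z|}{r}$ is a fixed finite number, the error term vanishes and we conclude $\nu(z) \leq \max_{\zeta \in \partial\D_r}\nu(\zeta)$, which is the claimed inequality.

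For the final assertion, I would define the extension $\bar\nu$ on $\D_r$ by $\bar\nu(0) := \limsup_{z\to 0}\nu(z)$; note $\bar\nu(0) = -\infty$ is possible, which is allowed for subharmonic functions (or one shows $\bar\nu$ is bounded above near $0$ by the inequality just proved and takes the upper semicontinuous regularization). The bound $\sup_{\D_r^*}\nu \leq \max_{\partial\D_r}\nu$ shows $\bar\nu$ is bounded above near $0$, so the singularity is removable in the sense of \cite{hayman-kennedy}: a subharmonic function on a punctured disk that is bounded above extends to a subharmonic function across the puncture. I do not expect a serious obstacle here; the only mild care needed is the uniformity of the blow-up in \eqref{slowblowup} (it holds because $\nu$ extends continuously to $\bar\D_r^*$, so $\nu$ is bounded on a neighborhood of $\partial\D_\delta$ for each small $\delta$, making the convergence $\nu_\epsilon \to -\infty$ uniform on circles $|z| = s$ as $s \to 0$) and the order of quantifiers when sending $s \to 0$ before $\epsilon \to 0$.
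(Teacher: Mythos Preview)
Your proof is correct and follows essentially the same approach as the paper: both add an $\epsilon\log|z|$ barrier to force the function to $-\infty$ at the puncture, apply the maximum principle, and then let $\epsilon\to 0$. The only cosmetic difference is that the paper first subtracts the harmonic Dirichlet extension $h$ of $\nu|_{\partial\D_r}$ (so that the comparison function vanishes on $\partial\D_r$) and obtains the slightly sharper intermediate bound $\nu\le h$, whereas you work directly with $\nu+\epsilon\log|z|$ via an annulus exhaustion; your route is a bit more elementary since it avoids invoking the Dirichlet solution.
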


\begin{proof}
Let  $h:\D_r \rightarrow \R$ be  the unique Dirichlet solution  with $h|_{\partial \D_r}=\nu|_{\partial \D_r}$.  By the maximum principle,  $h$ is non-negative and bounded on $\D_r$.  The function 
\[
f_{\epsilon}(z)=\nu(z)-h(z)+\epsilon (\log |z|-\log r)
\]
is subharmonic on $\D_r^*$ with 
$
f_{\epsilon}|_{\partial \D_r} \equiv 0$ on $\partial \D_r$.
Furthermore, since $h$ is a bounded function and $\nu$ satisfies (\ref{slowblowup}), 
\[ 
\lim_{z \rightarrow 0} f_{\epsilon}(z) =-\infty.
\]
By the maximum principle, $f_{\epsilon} \leq 0$.  By letting $\epsilon \rightarrow 0$, we conclude $\nu(z) -h(z) \leq 0$  for all $z \in \D_r^*$.
Thus, 
\[
\sup_{z \in \D_r^*} \nu(z)  \leq \sup_{z \in \D_r}h(z) \leq    \max_{\zeta \in \partial \D_r} h(\zeta)=\max_{\zeta \in \partial \D_r} \nu(\zeta).
\] 
\end{proof}

\begin{lemma} \label{lowerbd}
If  $\rho$ and $I$ are as in Theorem~\ref{exists}, then for any section $f: \D_{r,r_0} \rightarrow \DDD \times_{\rho} \tilde X$,
\[
\mathsf{E}_\rho \log \frac{r_0}{r}  \leq 
  \int_{\D_{r,r_0}}  \frac{1}{r^2} \left| \frac{\partial f}{\partial \theta} \right|^2 rdr \wedge d\theta \leq E^{f}[\D_{r,r_0}], \ \ 0<r<r_0<1. 
\] 
\end{lemma}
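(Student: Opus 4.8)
\textbf{Proof proposal for Lemma~\ref{lowerbd}.}

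The plan is to reduce the statement to a one-dimensional computation on circles $\{|z|=s\}$ together with the elementary lower bound on the length of a curve in $\tilde X$ representing the generator of $\pi_1(\Sp^1)$. First I would recall that, by the definition of the energy of a section in Subsection~\ref{subsec:donaldsonsection}, and with $g$ the Euclidean metric $dzd\bar z$ on the annulus, one has in polar coordinates $(s,\theta)$ that
\[
E^f[\D_{r,r_0}] = \int_{\D_{r,r_0}} |\nabla f|^2 \, s\,ds\wedge d\theta
= \int_{\D_{r,r_0}} \left( \left|\frac{\partial f}{\partial s}\right|^2 + \frac{1}{s^2}\left|\frac{\partial f}{\partial \theta}\right|^2 \right) s\, ds\wedge d\theta,
\]
which immediately gives the second inequality $\int_{\D_{r,r_0}} \frac{1}{s^2}|\partial_\theta f|^2\, s\,ds\wedge d\theta \le E^f[\D_{r,r_0}]$ since the $|\partial_s f|^2$ term is nonnegative.

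For the first inequality, fix $s \in (r,r_0)$ and lift the restriction $f|_{\{|z|=s\}}$ to the $\rho$-equivariant map $\tilde f$ on the covering line $\R$; the loop $\theta \mapsto \tilde f(s,\theta)$, $\theta\in[0,2\pi]$, goes from a point $P$ to $I(P)$, where $I=\rho([\Sp^1])$. Hence its length is at least $d(I(P),P)\ge \Delta_I$, the translation length of $I$. By Cauchy--Schwarz applied to the (Korevaar--Schoen) directional energy on the circle (using that for almost every $s$ the restriction is in the appropriate Sobolev class and the length is controlled by the square root of the energy times $2\pi$), one gets
\[
\Delta_I^2 \le \left( \int_0^{2\pi} \left|\frac{\partial f}{\partial \theta}\right| d\theta \right)^2 \le 2\pi \int_0^{2\pi} \left|\frac{\partial f}{\partial \theta}\right|^2 d\theta,
\]
so that $\int_0^{2\pi} \frac{1}{s^2}|\partial_\theta f|^2\, d\theta \ge \frac{\Delta_I^2}{2\pi s^2} = \frac{\mathsf{E}_\rho}{s^2}$ with $\mathsf{E}_\rho = \frac{\Delta_I^2}{2\pi}$. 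I would then integrate this over $s\in(r,r_0)$ against $s\,ds$, obtaining
\[
\int_{\D_{r,r_0}} \frac{1}{s^2}\left|\frac{\partial f}{\partial \theta}\right|^2 s\, ds\wedge d\theta
\ge \int_r^{r_0} \frac{\mathsf{E}_\rho}{s}\, ds = \mathsf{E}_\rho \log\frac{r_0}{r},
\]
which is exactly the claimed lower bound.

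The main obstacle is the measure-theoretic bookkeeping in the middle step: the Korevaar--Schoen theory only provides the directional energy densities as $L^1$ functions and the section $f$ is merely assumed to be (locally) in the relevant Sobolev space, so I need to justify that for almost every $s$ the slice $\theta\mapsto \tilde f(s,\theta)$ is an $L^2$ (in fact $W^{1,2}$) curve whose length is bounded by $\left(2\pi\int_0^{2\pi}|\partial_\theta f|^2 d\theta\right)^{1/2}$ and that it does connect $P$ to $I(P)$ for some $P$; this is the standard Fubini-type argument for Sobolev maps into metric spaces (cf.~\cite{korevaar-schoen1}), and equivariance guarantees the endpoint condition. Everything else — the expansion of the energy density in polar coordinates, the nonnegativity of $\pi_f$, and the final integration in $s$ — is routine.
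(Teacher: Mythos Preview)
Your proof is correct and follows essentially the same approach as the paper's: both restrict $f$ to the circles $\partial \D_s$, bound the angular energy on each slice from below by $\mathsf{E}_\rho=\frac{\Delta_I^2}{2\pi}$, and then integrate in $s$; the paper simply asserts the slice bound ``by the definition of $\mathsf{E}_\rho$'' whereas you spell it out via the length lower bound $\Delta_I$ and Cauchy--Schwarz, which is exactly the computation underlying that definition. Your additional remarks on the Fubini/slicing justification are a welcome bit of rigor that the paper leaves implicit.
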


\begin{proof}
By the definition of $\mathsf{E}_I$, any section  $c:\Sp^1 \rightarrow \R \times_{\rho} \tilde X$ satisfies
\[
\mathsf{E}_\rho
 \leq   \int_0^{2\pi} \left| \frac{\partial c}{\partial \theta} \right|^2 d\theta.
 \]
After identifying $\Sp^1$ with $\partial \D_r$,  we can view the restriction  $f|_{\partial \D_r}$ as a section  $\Sp^1 \rightarrow \R \times_{\rho} \tilde X$.  
Thus, 
\begin{eqnarray*}\label{ggeo} 
\mathsf{E}_\rho \log \frac{r_0}{r} & = & \mathsf{E}_\rho  \int_r^{r_0} \frac{1}{r^2}  rdr \wedge d\theta \nonumber\\
&  \leq &   \int_0^{2\pi} \int_r^{r_0} \frac{1}{r^2} \left| \frac{\partial f}{\partial \theta} (r,\theta) \right|^2 rdr \wedge d\theta \\
& \leq & E^f[\D_{r,r_0}] \nonumber.
\end{eqnarray*}
\end{proof}

\begin{lemma} \label{defineg}
If $\rho$, $I$ and $k:\DDDD$ are as in  Theorem~\ref{exists}, then there exists a constant $C>0$ and a  Lipschitz section $v:\DDDD$  with $v|_{\partial \D}=k|_{\partial \D}$ such that 
\begin{equation}   \label{glog}
\mathsf{E}_\rho \log \frac{r_0}{r} \leq  E^v[\D_{r,r_0}]  \leq \mathsf{E}_\rho \log \frac{r_0}{r}+C, \ \ \ 0<r<r_0\leq \frac{1}{2}.
\end{equation}
Moreover, the Lipschitz constant of $v$ and $C$ are dependent only on $\mathsf{E}_\rho=\frac{\Delta_I^2}{2\pi}$ and  $a$, $b$ form assumption (B) if $I$ is not semisimple and $k$.
\end{lemma}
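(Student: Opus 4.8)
\textbf{Proof proposal for Lemma~\ref{defineg}.}

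The plan is to build $v$ by interpolating between the given boundary section $k|_{\partial\D}$ and a model ``rotationally symmetric'' section on the inner part of the disk, exactly in the spirit of the prototype map $v$ described in the introduction. First I would analyze the model on the cylinder. Via the conformal change $z = e^{-(t+i\theta)}$, the punctured disk $\D^*$ becomes the half-cylinder $[0,\infty)\times\Sp^1$, with $\D_{r,r_0}$ corresponding to $[\log\frac1{r_0},\log\frac1r]\times\Sp^1$, and the energy of a section on $\D_{r,r_0}$ equals the (conformally invariant) energy of the pulled-back section on the corresponding cylinder. On the cylinder one has a geodesic-type model $w_0:[0,\infty)\times\Sp^1\to \R\times_\rho\tilde X$ whose restriction to each slice $\{t\}\times\Sp^1$ is (the section corresponding to) a constant-speed geodesic path of length $\Delta_I$ from a point $P_*$ to $I(P_*)$ when $I$ is semisimple, and an $\epsilon$-approximate version using the ray $c$ from assumption (B) when $I$ is not semisimple; in either case the energy of $w_0$ on $[t_1,t_2]\times\Sp^1$ is $\mathsf E_\rho(t_2-t_1)$ in the semisimple case, and in the non-semisimple case bounded by $\mathsf E_\rho(t_2-t_1)$ plus a convergent correction $\int b e^{-a t}\,dt$ controlled purely by $a,b$ — this is where the hypothesis (B) is used and is the only delicate estimate. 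The lower bound $\mathsf E_\rho\log\frac{r_0}{r}\le E^v[\D_{r,r_0}]$ will then come for free from Lemma~\ref{lowerbd}, which already asserts it for \emph{any} section, so only the upper bound requires construction.

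Next I would splice $w_0$ to the boundary data. Since $k$ is locally Lipschitz, $k|_{\partial\D}$ corresponds to a Lipschitz section on the slice $\{0\}\times\Sp^1$; there is a Lipschitz homotopy through sections of the flat bundle from $k|_{\{0\}\times\Sp^1}$ to $w_0|_{\{T_0\}\times\Sp^1}$ for a fixed finite $T_0$ (say $T_0=\log 2$, corresponding to $r_0=\tfrac12$), because both are sections of the same flat $\tilde X$-bundle over $\Sp^1$ and $\tilde X$ is contractible (one can simply take the geodesic interpolation $(1-s)k + s\,w_0$ pointwise using Notation~\ref{interpolationnotation}, after trivializing over an interval). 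Define $v$ to equal this spliced interpolation on $[0,T_0]\times\Sp^1$ and to equal $w_0$ on $[T_0,\infty)\times\Sp^1$, and pull back to $\D^*$. On the compact collar $[0,T_0]\times\Sp^1$ the section $v$ is manifestly Lipschitz with constant depending only on the Lipschitz norm of $k$, on $\Delta_I$, and on $a,b$; in particular its energy there is a finite constant $C_0$ of the allowed form. For $\D_{r,r_0}$ with $r_0\le\frac12$ we then have $E^v[\D_{r,r_0}]=E^{w_0}[\D_{r,r_0}]\le \mathsf E_\rho\log\frac{r_0}{r}+C_1$ with $C_1$ the tail correction from the non-semisimple case (and $C_1=0$ in the semisimple case), yielding the upper bound in \eqref{glog} with $C:=C_0+C_1$.

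The main obstacle, as indicated, is the construction of the model section $w_0$ in the non-semisimple case and the verification that its energy overshoots the logarithmic rate only by a $T$-independent amount. Concretely: using the geodesic ray $c$ from (B), one defines on the slice $\{t\}\times\Sp^1$ the constant-speed path from $c(t)$ to $I(c(t))$; its length is $d(I(c(t)),c(t))\le \Delta_I\sqrt{1+be^{-at}}\le \Delta_I(1+\tfrac b2 e^{-at})$, so the $\theta$-energy of the slice is at most $\frac{1}{2\pi}\Delta_I^2(1+be^{-at})=\mathsf E_\rho + \mathsf E_\rho b e^{-at}$, while the $t$-energy contributes $\bigl|\tfrac{d}{dt}c(t)\bigr|^2$ plus cross terms, all of which are bounded using that $c$ is a unit-speed ray and that $I$ is a fixed isometry — integrating over $t\in[\log\frac1{r_0},\log\frac1r]\subset[\log 2,\infty)$ gives $\mathsf E_\rho\log\frac{r_0}{r}$ plus $\int_{\log 2}^\infty(\mathsf E_\rho b e^{-at} + \text{bounded})\,dt<\infty$, a constant depending only on $\mathsf E_\rho$, $a$, $b$. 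One must be slightly careful that ``constant-speed path between $c(t)$ and $I(c(t))$'' varies Lipschitz-continuously in $t$ and is equivariant — this follows from the CAT(0) property (uniqueness and continuity of geodesics, Notation~\ref{interpolationnotation}) and $\rho$-equivariance of the assignment $P\mapsto$ geodesic from $P$ to $I(P)$. Everything else is bookkeeping with the conformal factor $z=e^{-(t+i\theta)}$ and the conformal invariance of the Korevaar--Schoen energy.
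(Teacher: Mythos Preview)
Your outline matches the paper's construction almost exactly --- build a model section from slice-wise geodesics between $c(s)$ and $I(c(s))$, splice to $k|_{\partial\D}$ by geodesic interpolation on a fixed collar, get the lower bound for free from Lemma~\ref{lowerbd} --- but there is one genuine gap in the non-semisimple case.

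You take the cylinder parameter $t$ itself as the parameter along the ray $c$, i.e.\ on the slice $\{t\}\times\Sp^1$ you use the geodesic from $c(t)$ to $I(c(t))$. As you note, the $t$-directional energy density is then pointwise bounded (indeed $|\partial_t w_0|\le 1$ by quadrilateral comparison, since $c$ is unit-speed and $I$ is an isometry). But ``bounded'' is not enough: the assertion ``$\int_{\log 2}^{\infty}(\text{bounded})\,dt<\infty$'' is false, and the radial contribution to $E^{w_0}$ on $[\log\frac{1}{r_0},\log\frac{1}{r}]\times\Sp^1$ is of order $2\pi\log\frac{r_0}{r}$, not a constant. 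Your construction therefore only yields $E^v[\D_{r,r_0}]\le(\mathsf E_\rho+2\pi)\log\frac{r_0}{r}+C$, which is not \eqref{glog}.

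The paper fixes this with a single reparametrization trick: instead of using $c(t)$ on the slice $\{t\}\times\Sp^1$, it uses $c(s)$ with $s=(-\log r-\log 2)^{1/3}\sim t^{1/3}$. Then $|\partial_t v|\le \frac{ds}{dt}\lesssim t^{-2/3}$, so the radial energy density is $\lesssim t^{-4/3}$, which \emph{is} integrable on $[\log 2,\infty)$; at the same time $s\to\infty$ as $t\to\infty$, so the angular correction $be^{-as}=be^{-a(t-\log 2)^{1/3}}$ remains integrable in $t$. Any sublinear reparametrization with $\int (ds/dt)^2\,dt<\infty$ and $s\to\infty$ would do. Inserting this one change, the rest of your argument goes through verbatim and coincides with the paper's proof.
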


\begin{proof} 
We will  first construct a one-parameter family of  sections   $\gamma_s:\Sp^1 \mapsto \R \times_{\rho} \tilde X$ as follows:
\begin{itemize}
\item If $I$ is semisimple, let $P_{\star} \in \tilde X$ be a point where the infimum $\Delta_I$ is attained.  For any $s \in [0,\infty)$, define 
$\tilde \gamma_s:\R \rightarrow \tilde X$ to be the $\rho$-equivariant geodesic  (a constant map if $I$ is elliptic) such that $\tilde \gamma_s(0)=P_{\star}$ and  $\tilde \gamma_s(2\pi)=I(P_{\star})$ and  let $\gamma_s:\Sp^1 \rightarrow \tilde \Sp^1 \times_{\rho} \tilde X$ be the associated section.
\item
Otherwise, 
let  
 $c:[0,\infty) \rightarrow \tilde X$ be a geodesic ray defined in assumption (B).  Define the $\rho$-equivariant curve 
$\tilde \gamma_s:\R  \rightarrow \tilde X$ such that $\tilde \gamma_s|_{[0,2\pi]}$ is  the geodesic from $c(s)$ and $I(c(s))$ and  let $\gamma_s:\Sp^1 \rightarrow \tilde \Sp^1 \times_{\rho} \tilde X$ be the associated section.
\end{itemize}
We note that by the quadrilateral comparison of CAT(0) spaces (cf.~\cite{reshetnyak}), for $\theta
\in [0,2\pi]$,
\[
d(\tilde \gamma_{s_1}(\theta), \tilde \gamma_{s_2}(\theta)) \leq (1-\frac{\theta}{2\pi}) d(c(s_1), c(s_2))+\frac{\theta}{2\pi}d(I \circ c(s_1)), I \circ c(s_2)))=|s_1-s_2|.
\]
 Thus
 \begin{equation} \label{npcquad}
\left| \frac{d}{ds} (\tilde \gamma_s(\theta)) \right| \leq 1, \ \ \ \forall  s \in (0,\infty), \  \theta \in \Sp^1.
\end{equation} 
By the assumption on $c(s)$,  we have
\begin{equation} \label{gammatbd}
\int_{\Sp^1} \left| \frac{\partial \gamma_s}{\partial \theta} \right|^2 d\tau \leq \mathsf{E}_\rho(1+2\pi be^{-as}).
\end{equation} 

Define a $\rho$-equivariant map $\tilde v:  \DDD \rightarrow \tilde X$ as follows:  First, for $(r,t) \in  \DDD$,  let
\[
\tilde v(r,t):=\tilde \gamma_{(-\log r-\log 2)^{\frac{1}{3}}}(t) \mbox{ for } r \in (0,\frac{1}{2}] 
\  \mbox{ and } \ 
 \tilde v(1,t):=\tilde k(1,t).
\] 
Next,  for each $t \in \R$, let 
\[
r \mapsto  \tilde v(r,t) \mbox{ for } r \in [\frac{1}{2},1]
\]
to be the arclength parameterization of the  geodesic between $\tilde v(\frac{1}{2}, t)$ and $\tilde k(1,t)$.  Finally, let
$v:\DDDD$ be the section associated to the $\rho$-equivariant map $\tilde v$.

For $0<r<r_0<\frac{1}{2}$, we have
\begin{eqnarray*}
\mathsf{E}_\rho \log \frac{r_0}{r} & \leq  &  \int_0^{2\pi} \int_r^{r_0} \frac{1}{r^2} \left| \frac{\partial v}{\partial \theta} \right|^2 rdr \wedge d\theta \ \ \ \ \  \mbox{(by Lemma~\ref{lowerbd})} \nonumber \\
& \leq  &  \int_0^{2\pi} \int_r^{r_0} \frac{1}{r^2}  \left| \frac{\partial (\gamma_{(-\log r-\log 2)^{\frac{1}{3}}})}{\partial \theta} \right|^2 rdr \wedge d\theta
\\
& =  &  \int_0^{2\pi} \int_{-\log r_0}^{-\log r}  \left| \frac{\partial (\gamma_{(t-\log 2)^{\frac{1}{3}}})}{\partial \theta} \right|^2 dtd\theta \nonumber \\
& \leq  &  \int_{-\log r_0}^{-\log r}( \mathsf{E}_\rho+be^{-a(t-\log 2)^{\frac{1}{3}}})
 dt \ \ \ \ \ \ \ \ \mbox{(by (\ref{gammatbd}))}\nonumber \\
& = & 
\mathsf{E}_\rho \log \frac{r_0}{r}+C.  
\end{eqnarray*}
By (\ref{npcquad}), 
\begin{eqnarray*}
\left| \frac{\partial v}{\partial r} \right|^2(r,\theta)  =  \left| \frac{\partial}{\partial s} \Big|_{s=(-\log r-\log 2)^{\frac{1}{3}} } \gamma_s(\theta) \right|^2  \left| \frac{\partial ( (-\log r-\log 2)^{\frac{1}{3}} )}{\partial r} \right|^2
\leq  \frac{1}{r^2(-\log r-\log 2)^{\frac{4}{3}}},
\end{eqnarray*}
and thus
\begin{eqnarray*}
  \int_0^{2\pi} \int_r^{r_0} \left| \frac{\partial v}{\partial r} \right|^2 rdr \wedge d\theta  & = &  \int_0^{2\pi} \int_r^{r_0}  \frac{1}{r(-\log r-\log 2)^{\frac{4}{3}}} dr \wedge d\theta  \leq C.  
\end{eqnarray*}
\end{proof}

\begin{lemma}[Existence and property (i) of Theorem~\ref{exists}] \label{(i)}
If $\tilde X$, $\rho$,  $\mathsf{E}_\rho$ and $k$ as in  Theorem~\ref{exists}, 
then there  exists a  harmonic section  $u:\D^* \rightarrow \widetilde{\D^*} \times_\rho \tilde X$  with  $u|_{\partial \D} =k|_{\partial \D}$ and a constant $C>0$ that   depends only on  $\mathsf{E}_\rho=\frac{\Delta_I^2}{2\pi}$,  $a$, $b$ from (B) if $I$ is not semisimple and the section $k$ such that 
\[
\mathsf{E}_\rho  \log \frac{1}{r} \leq E^u[\D_{r,1}] \leq \mathsf{E}_\rho \log \frac{1}{r}+C, \ \ \ 0<r\leq 1.
\]
\end{lemma}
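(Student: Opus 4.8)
The plan is to obtain $u$ as a locally uniform limit of Dirichlet solutions on an exhaustion of $\D^*$ by annuli, following the cylinder heuristic of the introduction. First I would fix the comparison section $v:\D^*\to\widetilde{\D^*}\times_\rho\tilde X$ produced by Lemma~\ref{defineg}, so that $v|_{\partial\D}=k|_{\partial\D}$ and $\mathsf{E}_\rho\log\frac{r_0}{r}\le E^v[\D_{r,r_0}]\le\mathsf{E}_\rho\log\frac{r_0}{r}+C$ for $0<r<r_0\le\frac12$; since $v$ is Lipschitz on $\overline{\D_{1/2,1}}$, adding the fixed energy there upgrades this (after enlarging $C$) to $E^v[\D_{r,1}]\le\mathsf{E}_\rho\log\frac1r+C$ for all $0<r\le1$, with $C$ depending only on $\mathsf{E}_\rho$, on $a,b$ when $I$ is not semisimple, and on $k$. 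For each $R\in(0,1)$ let $u_R$ be the solution of the Dirichlet problem for sections on the compact annulus $\overline{\D_{R,1}}$ with boundary data $v$ on both circles $\partial\D$ and $\partial\D_R$; its existence and its interior (and up to $\partial\D$) Lipschitz regularity are furnished by the Korevaar--Schoen and Serbinowski theory for the Dirichlet problem into complete CAT(0) spaces. Since $u_R$ is energy minimizing and $v$ is an admissible competitor, $E^{u_R}[\D_{R,1}]\le E^v[\D_{R,1}]\le\mathsf{E}_\rho\log\frac1R+C$.

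The crux is a uniform trapping estimate. For $R\le r<r_0\le1$, Lemma~\ref{lowerbd} gives $E^{u_R}[\D_{R,r}]\ge\mathsf{E}_\rho\log\frac rR$ and, letting the outer radius tend to $1$, $E^{u_R}[\D_{r_0,1}]\ge\mathsf{E}_\rho\log\frac1{r_0}$, so additivity of the energy measure over $\D_{R,1}=\D_{R,r}\cup\D_{r,r_0}\cup\D_{r_0,1}$ yields
\[
E^{u_R}[\D_{r,r_0}]\le\Big(\mathsf{E}_\rho\log\tfrac1R+C\Big)-\mathsf{E}_\rho\log\tfrac rR-\mathsf{E}_\rho\log\tfrac1{r_0}=\mathsf{E}_\rho\log\tfrac{r_0}{r}+C .
\]
Hence on every fixed subannulus the energies $E^{u_R}[\D_{r,r_0}]$ are bounded independently of $R$ (once $R<r$), and together with the Lipschitz bound of $k$ on $\partial\D$ the interior regularity theory supplies a Lipschitz bound for $\{u_R\}_{R<r}$ on $\overline{\D_{r,r_0}}$ that is uniform in $R$.

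Finally I would take $R_i\to0$ and, via the compactness theory for maps into CAT(0) spaces with uniformly bounded energy and uniform local Lipschitz bounds, extract a subsequence converging locally uniformly on $\D^*$ to a section $u$ with $u|_{\partial\D}=k|_{\partial\D}$; the limit $u$ is harmonic because a locally uniform limit of energy-minimizing sections with locally bounded energy is itself energy minimizing. For the growth bounds, the lower bound $E^u[\D_{r,1}]\ge\mathsf{E}_\rho\log\frac1r$ is immediate from Lemma~\ref{lowerbd} (outer radius $\to1$), while the upper bound follows from lower semicontinuity of the energy together with $E^{u_{R_i}}[\D_{r,1}]=E^{u_{R_i}}[\D_{R_i,1}]-E^{u_{R_i}}[\D_{R_i,r}]\le\mathsf{E}_\rho\log\frac1r+C$; all constants have been tracked through Lemma~\ref{defineg}, so $C$ depends only on $\mathsf{E}_\rho$, on $a,b$ when $I$ is not semisimple, and on $k$. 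I expect the main obstacle to be this limiting step: producing the locally uniform limit when $\tilde X$ need not be locally compact---which has to be done by the Korevaar--Schoen weak-compactness theorem, upgraded to local uniform convergence by the equi-Lipschitz bounds---and checking that the limit is genuinely energy minimizing and not merely a weak limit. The trapping estimate, though the conceptual heart of the argument, is short once Lemmas~\ref{lowerbd} and~\ref{defineg} are available.
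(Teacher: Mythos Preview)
Your proposal is correct and follows essentially the same route as the paper: construct the prototype $v$ via Lemma~\ref{defineg}, solve Dirichlet problems on $\D_{R,1}$ with boundary data $v$, derive a uniform energy bound on fixed subannuli by sandwiching with Lemma~\ref{lowerbd}, and pass to a locally uniform limit using the Korevaar--Schoen regularity and compactness theory; the energy bounds for the limit then follow from Lemma~\ref{lowerbd} and lower semicontinuity exactly as you indicate. Your trapping estimate and final upper bound are organized slightly differently from the paper's (you subtract lower bounds on both the inner and outer annuli, whereas the paper compares directly to $E^v[\D_{r_0,1}]$), but the two computations are equivalent, and you are in fact more explicit than the paper about the delicate point of extracting the limit when $\tilde X$ is not locally compact.
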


\begin{proof}
Let  $v$ be as in Lemma~\ref{defineg}. By \cite[Theorem 2.7.2]{korevaar-schoen1} (or more specifically, by the proof of this theorem which solves the equivariant problem), there exists a  unique harmonic section 
\[
u_r:   \D_{r,1} \rightarrow  \widetilde{ \D_{r,1}}\times_{\rho} \tilde X
\mbox{ with } u_r|_{\partial \D_{r,1}}=v|_{\partial \D_{r,1}}.
 \] 
We have that
\begin{eqnarray}
\mathsf{E}_\rho \log \frac{r_0}{r} +E^{u_r}[\D_{r_0,1}] & \leq & E^{u_r}[\D_{r,r_0}] +E^{u_r}[\D_{r_0,1}]  \ \ \ \ \  \mbox{(by Lemma~\ref{lowerbd})} \nonumber 
\\
& = & E^{u_r}[\D_{r,1}]\nonumber  \\
 & \leq  &  E^{v}[\D_{r,1}] \ \ (\mbox{since $u_r$ is minimizing}) \nonumber  \\
  & = &  E^{v}[\D_{r,r_0}]+ E^{v}[\D_{r_0,1}]  \nonumber  \\
 & \leq &  
\mathsf{E}_\rho \log \frac{r_0}{r} +C +E^{v}[\D_{r_0,1}] 
 \ \ \ \ \  \mbox{(by (\ref{glog}))}.  \nonumber 
 \end{eqnarray}
Therefore,
 \begin{equation} \label{bdvr}
 E^{u_r}[\D_{r_0,1}] \leq C+ E^{v}[\D_{r_0,1}].
 \end{equation}
Note that the right hand side of the above is independent of $r \in (0,\frac{1}{2}]$.  
Thus, $\{u_r|_{\D_{2r_0,1}}\}$ is an equicontinuous family of sections (cf.~\cite[Theorem 2.4.6]{korevaar-schoen1}). Consequently, there exists a sequence  $r_i \rightarrow 0$ and a harmonic section 
\begin{equation} \label{hmpd}
u:  \DDDD \mbox{ with }  u|_{\partial \D} =k|_{\partial \D}.
\end{equation}
 such that the sequence $\{u_{r_i}\}$ converges uniformly on every  compact subsets of $\D^*$ to  $u$.

To prove property (i),  we note that for $0<r<r_1<1$,  \begin{eqnarray*}
E^{u_r}[\D_{r,r_1}]  +E^{u_r}[\D_{r_1,1}] & = & E^{u_r}[\D_{r,1}] \\
& \leq &   E^{v}[\D_{r,1}]
\\
& = & E^{v}[\D_{r,r_1}]+E^{v}[\D_{r_1,1}] \end{eqnarray*}
which, combined with Lemma~\ref{lowerbd} and (\ref{glog}), imply that
\[
E^{u_r}[\D_{r_1,1}]  \leq E^v[\D_{r_1,1}]+C.
\]
Letting $r \rightarrow 0$ and applying the lower semicontinuity of energy \cite[Theorem 1.6.1]{korevaar-schoen1},  we obtain
\[
E^{u}[\D_{r_1,1}]  \leq E^v[\D_{r_1,1}]+C.
\]
Applying Lemma~\ref{lowerbd} and (\ref{glog}), we obtain
\[
\mathsf{E}_\rho \log \frac{1}{r_1}  \leq E^{u}[\D_{r_1,1}]  \leq E^v[\D_{r_1,1}]+C \leq \mathsf{E}_\rho \log \frac{1}{r_1}+C.
\]
\end{proof}

   \begin{lemma}  \label{integrable1}
  If    $u:\DDDD$ is a harmonic section  with logarithmic energy growth,
then
 \[
\int_{\D^*}  \left| \frac{\partial u}{\partial r}\right|^2 rdr \wedge d\theta  \leq C \ \mbox{ and } \ \int_{\D^*}  \frac{1}{r^2} \left( \left| \frac{\partial u}{\partial \theta}\right|^2 -  \frac{\mathsf{E}_\rho}{2\pi} \right) rdr \wedge d\theta  \leq C
\]
where $C$ that depends $C_0$ from (\ref{Cbddef}). \end{lemma}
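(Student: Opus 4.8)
\emph{Proof idea.} The plan is to work on the annuli $\D_{r,1}$, $0<r<1$, and to play the logarithmic \emph{upper} bound for $E^u[\D_{r,1}]$ against the circle-wise \emph{lower} bound of Lemma~\ref{lowerbd}. Recall (cf.\ \cite{korevaar-schoen1} and the proof of Lemma~\ref{lowerbd}) that, in the polar coordinates $z=re^{i\theta}$ attached to the Euclidean conformal metric on $\D^*$, the energy density decomposes as $|\nabla u|^2=\left|\frac{\partial u}{\partial r}\right|^2+\frac{1}{r^2}\left|\frac{\partial u}{\partial\theta}\right|^2$, so that
\[
E^u[\D_{r,1}]=\int_{\D_{r,1}}\left|\frac{\partial u}{\partial r}\right|^2 r\,dr\wedge d\theta+\int_{\D_{r,1}}\frac{1}{r^2}\left|\frac{\partial u}{\partial\theta}\right|^2 r\,dr\wedge d\theta .
\]
Since $\int_{\D_{r,1}}\frac{1}{r^2}\cdot\frac{\mathsf{E}_\rho}{2\pi}\,r\,dr\wedge d\theta=\mathsf{E}_\rho\log\frac{1}{r}$, subtracting $\mathsf{E}_\rho\log\frac{1}{r}$ gives the key identity
\[
E^u[\D_{r,1}]-\mathsf{E}_\rho\log\tfrac{1}{r}=\int_{\D_{r,1}}\left|\frac{\partial u}{\partial r}\right|^2 r\,dr\wedge d\theta+\int_{\D_{r,1}}\frac{1}{r^2}\left(\left|\frac{\partial u}{\partial\theta}\right|^2-\frac{\mathsf{E}_\rho}{2\pi}\right) r\,dr\wedge d\theta .
\]

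The next step is to note that both summands on the right are non-negative. The first one trivially is. For the second, the defining property of $\mathsf{E}_\rho$ recalled at the start of the proof of Lemma~\ref{lowerbd} gives $\int_0^{2\pi}\left|\frac{\partial u}{\partial\theta}(r,\theta)\right|^2 d\theta\ge\mathsf{E}_\rho$ for a.e.\ $r\in(0,1)$, hence
\[
\int_0^{2\pi}\frac{1}{r^2}\left(\left|\frac{\partial u}{\partial\theta}\right|^2-\frac{\mathsf{E}_\rho}{2\pi}\right)d\theta=\frac{1}{r^2}\left(\int_0^{2\pi}\left|\frac{\partial u}{\partial\theta}\right|^2 d\theta-\mathsf{E}_\rho\right)\ge0
\]
for a.e.\ $r$; consequently $r\mapsto\int_{\D_{r,1}}\frac{1}{r^2}\big(|\partial u/\partial\theta|^2-\frac{\mathsf{E}_\rho}{2\pi}\big)r\,dr\wedge d\theta$ is non-negative and non-decreasing as $r\downarrow0$, and so is $r\mapsto\int_{\D_{r,1}}|\partial u/\partial r|^2 r\,dr\wedge d\theta$. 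Finally I would invoke the hypothesis of logarithmic energy growth (Definition~\ref{logdec}; cf.\ property~(i) of Theorem~\ref{exists}): there is a constant $C_0$, the one in (\ref{Cbddef}) adapted to $\D^*$, with $E^u[\D_{r,1}]-\mathsf{E}_\rho\log\frac{1}{r}\le C_0$ for all $0<r\le1$. Inserting this into the key identity bounds each of the two non-negative summands by $C_0$, uniformly in $r$; letting $r\downarrow0$ and using monotone convergence yields the two stated estimates with $C=C_0$.

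I do not anticipate a real obstacle: the argument is essentially bookkeeping with the energy decomposition together with Lemma~\ref{lowerbd}. The one point requiring care is that $\int_{\D^*}\frac{1}{r^2}\big(|\partial u/\partial\theta|^2-\frac{\mathsf{E}_\rho}{2\pi}\big)\,r\,dr\wedge d\theta$ is a formal difference of two individually infinite quantities, so it must be interpreted as the increasing limit of the annular integrals $\int_{\D_{r,1}}$ as $r\downarrow0$; the monotonicity derived from the circle-wise lower bound is exactly what guarantees that this limit exists and is finite. A secondary, routine point is the a.e.-in-$r$ validity of the slicing into circles and of the circle energy inequality in the Korevaar--Schoen setting, which is already implicit in the proof of Lemma~\ref{lowerbd}.
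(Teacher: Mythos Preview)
Your proof is correct and is exactly the argument the paper has in mind: the one-line proof in the paper (``Follows immediately from the inequality (\ref{Cbddef}) in Definition~\ref{logdec} and Lemma~\ref{lowerbd}'') amounts precisely to the polar decomposition of the energy, the circle-wise lower bound from Lemma~\ref{lowerbd} giving non-negativity of the $\theta$-term, and the logarithmic upper bound on $E^u[\D_{r,1}]$ bounding the sum, then letting $r\downarrow 0$. You have simply written out the details.
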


 \begin{proof}
 Follows immediately from the inequality (\ref{Cbddef}) in the Definition~\ref{logdec} and Lemma~\ref{lowerbd}.
\end{proof}

   \begin{lemma} [Property (ii) of Theorem~\ref{exists}] \label{(ii)}  If    $u:\DDDD$ is a harmonic section with logarithmic energy growth,
then
\begin{eqnarray} \label{mika1}
\label{mika2}
 \left(\left| \frac{\partial u}{\partial \theta}\right|^2 - \frac{\mathsf{E}_\rho}{2\pi}\right) & \leq &  \frac{C}{(-\log r)}
\ \text{ in } \,
\D_{\frac{1}{4}}^*
\\
\lim_{r \rightarrow 0}  (-\log r)\left(  \left| \frac{\partial u}{\partial \theta} \right|^2(r,\theta) -     \frac{\mathsf{E}_\rho}{2\pi} \right)
 &  = &  0   \label{limitisF}
\\
\left| \frac{\partial u}{\partial r}\right|^2 & \leq &  \frac{C}{r^2 (-\log r)}  
\ \mbox{ in } \,
\D_{\frac{1}{4}}^*
\label{mika1}
\\
 \label{limitisFrad}
\lim_{r \rightarrow 0}  (-\log r) r^2  \left| \frac{\partial u}{\partial r} \right|^2 (r,\theta) &  = &  0.
\end{eqnarray}
where  $C$ that depends $C_0$ from (\ref{Cbddef}). 
\end{lemma}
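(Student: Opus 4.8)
The plan is to derive the four estimates from a Bochner-type subharmonicity argument for the directional energy densities, combined with Lemma~\ref{integrable1} and Lemma~\ref{shdisk}. First I would recall that for harmonic maps into CAT(0) spaces, the Korevaar–Schoen theory gives a weak subharmonicity (target-variation/monotonicity) for the function $|\nabla u|^2$, and more usefully here, for the angular energy density $\bigl|\frac{\partial u}{\partial\theta}\bigr|^2$ one has that a suitable combination is subharmonic on $\D^*$. The cleanest route is to introduce the auxiliary function $\phi(r,\theta) = \bigl|\frac{\partial u}{\partial\theta}\bigr|^2(r,\theta) - \frac{\mathsf E_\rho}{2\pi}$ (interpreting the directional densities via $\pi_u$ as in Subsection~\ref{subsec:korevaarschoen}) and show that $r^2\phi$, or rather $\phi$ composed with the logarithmic coordinate $t=-\log r$, satisfies $\Delta(\text{something}) \ge 0$ weakly in $\D^*_{1/2}$. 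The key analytic input is the conformal invariance: in the coordinates $(t,\theta)\in[\log 2,\infty)\times\Sp^1$ the domain metric becomes the flat cylinder metric, and the energy density expressions simplify so that $\bigl|\frac{\partial u}{\partial\theta}\bigr|^2$ in these coordinates is (weakly) subharmonic on the cylinder by the standard Bochner estimate for NPC targets (this is the same ingredient as in Gromov–Schoen / Korevaar–Schoen regularity).

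Granting subharmonicity of $\phi$ (in cylinder coordinates), the first estimate \eqref{mika2} follows from Lemma~\ref{shdisk} applied on shrinking annuli together with the integral bound $\int_{\D^*}\frac1{r^2}\phi\, rdr\wedge d\theta \le C$ from Lemma~\ref{integrable1}: a subharmonic function on the cylinder $[T,\infty)\times\Sp^1$ whose integral against $dt\,d\theta$ is finite and which is bounded above on $\{t=T\}$ must decay, and a mean-value / Harnack-type comparison gives the pointwise bound $\phi(t,\theta)\le C/t$, i.e.\ $\phi\le C/(-\log r)$ in $\D^*_{1/4}$. The limit statement \eqref{limitisF} is then a refinement of the same argument: since $\int^\infty \bigl(\sup_\theta\phi(t,\cdot)\bigr)dt<\infty$ (by combining the integral bound with subharmonicity, which makes the slice-sup comparable to the slice-average), one has $t\cdot\sup_\theta\phi(t,\cdot)\to 0$, because a nonnegative-up-to-$o(1/t)$ subharmonic function that is integrable in $t$ cannot have its supremum stay $\gtrsim 1/t$ along a sequence. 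Here one must be a little careful that $\phi$ need not be nonnegative; but Lemma~\ref{lowerbd} shows the $\theta$-average of $\bigl|\frac{\partial u}{\partial\theta}\bigr|^2$ is $\ge \mathsf E_\rho$, i.e.\ the average of $\phi$ is $\ge 0$, which is enough to run the comparison from below.

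For the radial estimates \eqref{mika1} and \eqref{limitisFrad}, I would use the analogous subharmonicity of $r^2\bigl|\frac{\partial u}{\partial r}\bigr|^2$ (equivalently $\bigl|\frac{\partial u}{\partial t}\bigr|^2$ in cylinder coordinates), which is again subharmonic on the cylinder for NPC targets, together with the integral bound $\int_{\D^*}\bigl|\frac{\partial u}{\partial r}\bigr|^2 rdr\wedge d\theta \le C$ from Lemma~\ref{integrable1}. Since this density is genuinely nonnegative, the argument is slightly cleaner: a nonnegative subharmonic function on the cylinder with finite integral satisfies $\psi(t,\theta)\le C/t$ and $t\,\psi(t,\theta)\to 0$, giving exactly \eqref{mika1} and \eqref{limitisFrad} after translating back via $t=-\log r$ and absorbing the harmless shift $-\log 2$.

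\medskip

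I expect the main obstacle to be establishing the subharmonicity of the directional energy densities $\bigl|\frac{\partial u}{\partial\theta}\bigr|^2$ and $\bigl|\frac{\partial u}{\partial r}\bigr|^2$ in the weak sense appropriate to CAT(0) targets — the Bochner formula is not available pointwise, so one must invoke the Korevaar–Schoen (or Gromov–Schoen) monotonicity/target-variation machinery carefully, and in particular justify that the \emph{difference} $\bigl|\frac{\partial u}{\partial\theta}\bigr|^2 - \frac{\mathsf E_\rho}{2\pi}$ inherits the right one-sided inequality. A secondary technical point is promoting the finite-integral-plus-subharmonic input to the sharp $O(1/(-\log r))$ rate and the $o(1/(-\log r))$ limit; this is a standard but somewhat delicate ODE/comparison argument on the cylinder (comparing $\phi$ with the harmonic function $t\mapsto c/t$ is not legitimate since $1/t$ is not harmonic on the cylinder, so one instead uses that $\int_T^\infty \bar\phi(t)\,dt<\infty$ with $\bar\phi$ the slice-average, convexity of $t\mapsto\bar\phi(t)$ from subharmonicity, and monotonicity of $\bar\phi$, forcing $\bar\phi(t)=o(1/t)$ and even $\bar\phi$ summable implies $t\bar\phi(t)\to 0$ along the full parameter after using monotonicity). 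Once the subharmonicity is in hand, the rest is bookkeeping with Lemma~\ref{shdisk}, Lemma~\ref{lowerbd}, and Lemma~\ref{integrable1}.
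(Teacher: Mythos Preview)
Your approach is essentially the paper's: pass to cylinder coordinates $t=-\log r$, use that the directional energy densities $\left|\frac{\partial(u\circ\Phi)}{\partial t}\right|^2$ and $\left|\frac{\partial(u\circ\Phi)}{\partial\psi}\right|^2$ are subharmonic on the cylinder, deduce that the slice integrals $F(t)$ and $G(t)$ are convex and integrable (hence monotone, giving $tF(t)\le 2\int_{t/2}^\infty F$), and then apply the mean value inequality for subharmonic functions---not Lemma~\ref{shdisk}---to upgrade the slice bounds to pointwise bounds. The subharmonicity you flag as the main obstacle is not one: it is a direct citation of \cite[Remark~2.4.3]{korevaar-schoen1}, and since $\mathsf E_\rho/2\pi$ is a constant, subtracting it preserves subharmonicity trivially.
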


\begin{proof}
Consider the cylinder
\[
{\mathcal C}
=(0,\infty) \times \Sp^1
\]
and let
\begin{equation} \label{Phi}
\Phi:{\mathcal C} \rightarrow \D^*, \ \ \ \ (t , \psi) = (r=e^{-t}, \theta=\psi). 
\end{equation}

Since $\Phi$ is a conformal map, $u \circ \Phi$ is harmonic.    Thus, the directional energy density functions $\left| \frac{\partial (u \circ \Phi)}{\partial t} \right|^2$ and $\left| \frac{\partial (u \circ \Phi)}{\partial \psi} \right|^2$  are subharmonic by \cite[Remark 2.4.3]{korevaar-schoen1}.
Furthermore,
$\left| \frac{\partial (u \circ \Phi)}{\partial t} \right|^2$ and $\left| \frac{\partial (u \circ \Phi)}{\partial \psi} \right|^2 - \frac{\mathsf{E}_\rho}{2\pi}$ are integrable in ${\mathcal C}=(0,\infty) \times \Sp^1$ by Lemma~\ref{integrable1} and the chain rule.   

 The subharmonicity   of the directional energy density functions implies
\[
0 \leq \int_{(t_1,t_2) \times \Sp^1} \triangle \varphi  \left| \frac{\partial (u \circ \Phi)}{\partial \psi} \right|^2 dtd\psi, \ \ \forall \varphi \in C_c^{\infty}((t_1,t_2) \times \Sp^1), \ \varphi \geq 0.
\]
Letting $\varphi$ approximate the characteristic function of $(t_1,t_2) \times \Sp^1$, we obtain
\[
0 \leq  \frac{d}{dt}\Big|_{t=t_2} \left( \int_{\{t\} \times \Sp^1} \left| \frac{\partial (u \circ \Phi)}{\partial \psi} \right|^2 d\psi \right)-\frac{d}{dt}\Big|_{t=t_1} \left( \int_{\{t\} \times \Sp^1} \left| \frac{\partial (u \circ \Phi)}{\partial \psi} \right|^2 d\psi \right).
\]
In other words, 
\[
F(t) := \int_{\{t\} \times \Sp^1}  \left| \frac{\partial (u \circ \Phi)}{\partial \psi} \right|^2 -   \frac{\mathsf{E}_\rho}{2\pi} \ d\psi, \ \ \ t \in (0,\infty)
\]
 is a convex function.    By Lemma~\ref{integrable1}, $\int_0^{\infty} F(t) dt< \infty$.  
Since $F(t)$ is convex and  integrable,
$F(t)$ is a  decreasing.  Thus,  we obtain  
\begin{equation} \label{bbbb}
tF(t) \leq 2 \int_{\frac{t}{2}}^{t} F(\tau)\ d\tau \leq 2 \int_{\frac{t}{2}}^{\infty} F(\tau)\ d\tau.
\end{equation}
With $\B_1(t_0,\psi_0)$ denoting the unit disk centered at $(t_0,\psi_0) \in \mathcal C$,  the mean value inequality implies
\begin{eqnarray*}
\lefteqn{
t_0 \left( \left| \frac{\partial (u \circ \Phi)}{\partial \psi}\right|^2
-\frac{\mathsf{E}_\rho}{2\pi} 
\right)(t_0,\psi_0)  
}
\\
& \leq &   
\frac{t_0}{\pi} \int_{\B_1(t_0,\psi_0)}\left( \left| \frac{\partial (u \circ \Phi)}{\partial \psi}\right|^2
-\frac{\mathsf{E}_\rho}{2\pi} 
\right) dtd\psi \\
& \leq & 
\frac{t_0}{\pi} \int_{(t_0-1,t_0+1) \times \Sp^1} \left( \left| \frac{\partial (u \circ \Phi)}{\partial \psi}\right|^2
-\frac{\mathsf{E}_\rho}{2\pi} 
\right) dtd\psi 
\\
&  \leq & \frac{1}{\pi}
\frac{t_0}{t_0-1} \int_{(t_0-1,t_0+1) \times \Sp^1} t   \left( \left| \frac{\partial (u \circ \Phi)}{\partial \psi}\right|^2
-\frac{\mathsf{E}_\rho}{2\pi} 
\right)  dt d\psi  
\\
&  = & 
 \frac{1}{\pi}
\frac{t_0}{t_0-1} \int_{t_0-1}^{t_0+1} t  F(t) dt 
\\
&  \leq & 
 \frac{1}{\pi}
\frac{t_0}{t_0-1}   \int_{t_0-1}^{t_0+1} 2 \int_{\frac{t}{2}}^{\infty} F(\tau)\ d\tau dt \ \ \ \mbox{(by  (\ref{bbbb}))}.
\end{eqnarray*}
Since $F(\tau) \geq 0$, 
\[
\int_{\frac{t}{2}}^{\infty} F(\tau)\ d\tau \leq  \int_{\frac{t_0-1}{2}}^{\infty} F(\tau)\ d\tau, \ \ \ t \in (t_0-1,t_0+1).
\]
Combining the above two inequalities and assuming $t_0 \geq 2$ (and thus $\frac{t_0}{t_0-1}\leq 2$),
\begin{eqnarray}
t_0 \left(  \left| \frac{\partial (u \circ \Phi)}{\partial \psi}\right|^2 (t_0,\psi_0)  -\frac{\mathsf{E}_\rho}{2\pi} \right)
& \leq & \frac{4}{\pi} \int_{\frac{t_0-1}{2}}^{\infty} F(\tau)\ d\tau.
\label{inspector}
\end{eqnarray}
Estimates (\ref{mika2})  and (\ref{limitisF}) both follow from (\ref{inspector}) by the chain rule and the fact that  $F(\tau)$ is integrable on $(0,\infty)$ with  integral bound given by Lemma~\ref{integrable1}.

Similar argument for 
\[
G(t):= \int_{\{t\} \times \Sp^1}  \left| \frac{\partial (u \circ \Phi)}{\partial t} \right|^2 d\psi, \ \ \ t \in (0,\infty)
\]
 proves (\ref{mika1}) and (\ref{limitisFrad}).  
 \end{proof}

 \begin{lemma} \label{des}
Let  $u:\DDDD$ be a harmonic section  with logarithmic energy growth.   For any $\epsilon>0$, there exists $\rho_0>0$ such that 
 \[
d^2(u(r_1e^{i\theta}),u(r_0 e^{i\theta})) 
\leq
-\epsilon \log r_1, \ \ \ \forall 0<r_1<r_0\leq \rho_0, \, 0<\theta<2\pi.
\]
\end{lemma}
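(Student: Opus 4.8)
The plan is to bound $d(u(r_1e^{i\theta}),u(r_0e^{i\theta}))$ by the length of the image, under the $\rho$-equivariant map $\tilde u$ associated to $u$, of the radial segment $\{re^{i\theta}:r_1\le r\le r_0\}$, and to control that length using a version of the radial estimate (\ref{mika1}) that is uniform in $\theta$. The key point is that the radial length integrates like $\int_{r_1}^{r_0}\frac{dr}{r\sqrt{-\log r}}\sim\sqrt{-\log r_1}$, so its square is $O(-\log r_1)$, which is exactly the linear (rather than quadratic) growth demanded by the statement.

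\emph{Step 1: a $\theta$-uniform radial bound.} First I would revisit the \emph{proof} (not merely the statement) of Lemma~\ref{(ii)}. On the cylinder $\mathcal C=(0,\infty)\times\Sp^1$ with the conformal change of variables $\Phi$ from (\ref{Phi}), the directional energy density $\left|\frac{\partial(u\circ\Phi)}{\partial t}\right|^2$ is subharmonic, and the slice integral $G(t)=\int_{\{t\}\times\Sp^1}\left|\frac{\partial(u\circ\Phi)}{\partial t}\right|^2\,d\psi$ is convex and, by Lemma~\ref{integrable1}, integrable on $(0,\infty)$, hence nonincreasing, so that $tG(t)\le 2\int_{t/2}^\infty G(\tau)\,d\tau$ as in (\ref{bbbb}). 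Running the mean value inequality over the unit disk $\B_1(t_0,\psi_0)$ exactly as in the derivation of (\ref{inspector}), but with $G$ in place of $F$, gives for all $t_0\ge 2$ and all $\psi_0\in\Sp^1$
\[
t_0\left|\frac{\partial(u\circ\Phi)}{\partial t}\right|^2(t_0,\psi_0)\ \le\ \frac{8}{\pi}\int_{\frac{t_0-1}{2}}^\infty G(\tau)\,d\tau\ =:\ \omega(t_0),
\]
where $\omega(t_0)\to 0$ as $t_0\to\infty$ by the integrability of $G$, and, crucially, $\omega$ does not depend on $\psi_0$. Transporting this back to $\D^*$ via $\Phi$: for every $\epsilon>0$ there is $\rho_0>0$, which we may take $\le e^{-2}$, such that
\[
r^2(-\log r)\left|\frac{\partial u}{\partial r}\right|^2(r,\theta)\ \le\ \frac{\epsilon}{4},\qquad 0<r\le\rho_0,\ \ 0<\theta<2\pi.
\]

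\emph{Step 2: integration along the segment.} Fix $0<r_1<r_0\le\rho_0$ and $\theta\in(0,2\pi)$. Since $u$ is locally Lipschitz and, for a.e.\ $\theta$, the restriction of $\tilde u$ to a lift of the radial segment $\{re^{i\theta}:r_1\le r\le r_0\}$ is absolutely continuous with metric speed at most $\left|\frac{\partial u}{\partial r}\right|(re^{i\theta})$ (Korevaar--Schoen \cite{korevaar-schoen1}), the distance between the endpoints is bounded by the length of the image curve; so, using Step 1 and the substitution $s=-\log r$,
\[
d(u(r_1e^{i\theta}),u(r_0e^{i\theta}))\ \le\ \int_{r_1}^{r_0}\left|\frac{\partial u}{\partial r}\right|(re^{i\theta})\,dr\ \le\ \frac{\sqrt\epsilon}{2}\int_{-\log r_0}^{-\log r_1}\frac{ds}{\sqrt s}\ =\ \sqrt\epsilon\,\big(\sqrt{-\log r_1}-\sqrt{-\log r_0}\,\big)\ \le\ \sqrt{-\epsilon\log r_1}.
\]
Squaring gives $d^2(u(r_1e^{i\theta}),u(r_0e^{i\theta}))\le-\epsilon\log r_1$ for a.e.\ $\theta$; since the left-hand side is continuous in $\theta$ (continuity of $u$) while the right-hand side is independent of $\theta$, the inequality in fact holds for every $\theta\in(0,2\pi)$, which is the assertion.

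\emph{Main obstacle.} The only delicate step is Step~1. One cannot simply quote the pointwise limit (\ref{limitisFrad}), because Lemma~\ref{des} demands a single radius $\rho_0$ valid for \emph{all} $\theta$ simultaneously; what makes the uniformity available is that the mean value argument underpinning (\ref{limitisFrad}) already produces the $\psi_0$-free majorant $\omega(t_0)/t_0$, so one invokes the proof of Lemma~\ref{(ii)} rather than its statement. Everything after that is the elementary estimate $\int_{r_1}^{r_0}\frac{dr}{r\sqrt{-\log r}}\le 2\sqrt{-\log r_1}$ together with the standard fact that distance in a metric space is bounded by curve length.
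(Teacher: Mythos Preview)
Your proof is correct and follows essentially the same approach as the paper's: obtain the radial bound $\left|\frac{\partial u}{\partial r}\right|\le\frac{\sqrt\epsilon}{2r\sqrt{-\log r}}$ for small $r$ from (\ref{limitisFrad}), then integrate along the radial segment and square. Your explicit discussion of why the convergence in (\ref{limitisFrad}) is \emph{uniform} in $\theta$ (via the $\psi_0$-free majorant coming out of the mean value argument in the proof of Lemma~\ref{(ii)}) is a genuine clarification that the paper's own proof glosses over, and your passage from a.e.~$\theta$ to all $\theta$ by continuity is likewise a small point the paper leaves implicit.
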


 \begin{proof}
 Fix $\epsilon>0$.  The  convergence of 
 $ (-\log r) r^2  \left| \frac{\partial u}{\partial r} \right|^2 (r,\theta)$ to 0 as $r \rightarrow 0$ (cf.~(\ref{limitisFrad})) implies that  there exists $\rho_0>0$ such that
 \[
\left| \frac{\partial u}{\partial r} \right| \leq \frac{\sqrt{\epsilon}}{2r(-\log r)^{\frac{1}{2}} }, \ \ r \in (0,\rho_0]. 
\]
Thus, we have for $0<r_1<r_0 \leq \rho_0$, and noting $\rho$-equivariance of $\tilde u$, we have
\begin{eqnarray*}
d^2(u(r_1e^{i\theta}),u(r_0e^{i\theta})) & \leq & 
 \left(  \int_{r_0}^{r_1}
\frac{d}{dr} d(u(r e^{i\theta}),u(r_0 e^{i\theta}))dr \right)^2
\nonumber \\
& \leq  & 
 \left( 
 \int_{r_1}^{r_0} 
\left| \frac{\partial u}{\partial r} \right|(re^{i\theta}) dr 
\right)^2
\nonumber \\
& \leq  & 
\frac{\epsilon}{4}  \left(  \int_{r_1}^{r_0} \frac{dr}{r(-\log r)^{\frac{1}{2}}}   \right)^2 
\nonumber \\
& \leq  & 
\epsilon \left(  (-\log r_1)^{\frac{1}{2}} - (-\log r_0)^{\frac{1}{2}} \right)^2
 \nonumber \\
 & \leq & 
 -\epsilon \log r_1.
\end{eqnarray*}
\end{proof}

\begin{lemma}[Property (iii) of Theorem~\ref{exists}] \label{(iii)}
If    $u:\DDDD$ is a harmonic section with logarithmic energy growth,
 then $u$ has sub-logarithmic growth.
  \end{lemma}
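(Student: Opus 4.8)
The plan is to deduce this directly from Lemma~\ref{des} together with a triangle inequality against a fixed reference circle. Recall (Definition~\ref{logdec}, specialized to the single puncture of $\D^*$) that $u$ has sub-logarithmic growth if and only if for every $\epsilon>0$ one has $\delta(z)+\epsilon\log|z|\to-\infty$ as $|z|\to 0$, where $\delta(z)=\essinf d(u(z),f_0(z))$ and $f_0$ is the section determined by a fixed basepoint $P_0\in\tilde X$ and a fundamental domain. In the equivariant picture $f_0$ corresponds to the constant map $P_0$, so writing $\tilde u$ for the $\rho$-equivariant map associated with $u$ and $\tilde z$ for the lift of $z=re^{i\theta}$ in the fundamental domain, we have $\delta(re^{i\theta})\le d(\tilde u(\tilde z),P_0)$; it therefore suffices to bound $d(\tilde u(r,\theta),P_0)$ for a.e.\ $\theta$.

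First I would apply Lemma~\ref{des} with $\epsilon=1$ to obtain a radius $\rho_0>0$ such that, for all $0<r<\rho_0$ and all $\theta\in(0,2\pi)$,
\[
d\big(\tilde u(r,\theta),\tilde u(\rho_0,\theta)\big)\le(-\log r)^{1/2},
\]
where the left-hand side denotes the distance between the endpoints of the lift of the radial segment $\{se^{i\theta}:r\le s\le\rho_0\}$ — this is exactly the quantity estimated in the proof of Lemma~\ref{des} by integrating $\left|\frac{\partial u}{\partial r}\right|$ and invoking the $\rho$-equivariance of $\tilde u$. Since a harmonic section is continuous (indeed locally Lipschitz), $C_1:=\max_{\theta\in[0,2\pi]}d(\tilde u(\rho_0,\theta),P_0)$ is finite, and the triangle inequality gives, for $0<r<\rho_0$ and a.e.\ $\theta$,
\[
\delta(re^{i\theta})\le d(\tilde u(r,\theta),P_0)\le d\big(\tilde u(r,\theta),\tilde u(\rho_0,\theta)\big)+d\big(\tilde u(\rho_0,\theta),P_0\big)\le(-\log r)^{1/2}+C_1.
\]

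Finally, for every $\epsilon>0$ and every $z$ with $|z|<\rho_0$,
\[
\delta(z)+\epsilon\log|z|\le(-\log|z|)^{1/2}+C_1-\epsilon(-\log|z|),
\]
and since $-\log|z|\to+\infty$ as $|z|\to 0$ while $\sqrt{t}-\epsilon t\to-\infty$ as $t\to+\infty$, the right-hand side tends to $-\infty$. This is precisely the sub-logarithmic growth of $u$.

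I do not expect a genuine obstacle here, since the analytic content is entirely contained in Lemma~\ref{des}. The two points that need a little care are: (a) keeping track of lifts so that the radial-distance estimate of Lemma~\ref{des} is applied to $d(\tilde u(r,\theta),\tilde u(\rho_0,\theta))$, and not to an ill-defined distance between sections evaluated at distinct points; and (b) the elementary but essential observation that $(-\log|z|)^{1/2}$ is negligible compared with $\epsilon(-\log|z|)$ for each fixed $\epsilon>0$, which is why fixing $\epsilon=1$ in Lemma~\ref{des} already yields the conclusion for every $\epsilon$ appearing in Definition~\ref{logdec}.
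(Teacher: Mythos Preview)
Your proof is correct and follows essentially the same route as the paper: invoke Lemma~\ref{des} to bound the radial distance by a quantity of order $(-\log r)^{1/2}$, then use the triangle inequality against a fixed reference circle. The only cosmetic difference is that the paper applies Lemma~\ref{des} separately for each $\epsilon$ (choosing $r_0=r_0(\epsilon)$) and works with $d^2$, whereas you apply it once with $\epsilon=1$ and observe that the resulting $(-\log|z|)^{1/2}$ bound already beats $\epsilon\log|z|$ for every $\epsilon$; your version is a slight streamlining of the same argument.
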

 
 \begin{proof} 
 By Lemma~\ref{des}, for $\epsilon>0$, there exists 
 $r_0>0$ sufficiently small such that
\[
d^2(u(re^{i\theta}), u(r_0e^{i\theta})) \leq  - \frac{\epsilon}{4}\log r, \ \ \ r \in (0,r_0). 
\]
Set $P_0=\tilde u(r_0e^{i\theta_0}) \in \tilde X$ and define $f_0(p)=[(\tilde p, P_0)]$ as in Definition~\ref{logdec}.  For $z =re^{i\theta}$, 
\begin{eqnarray*}
d^2(u(z),f_0(z))+\epsilon \log |z|& = &  
d^2(u(re^{i\theta}),u(r_0e^{i\theta_0}))   +\epsilon \log r \\
& \leq  &  
2d^2(u(re^{\i\theta}),u(r_0e^{i\theta}))+2d^2(u(r_0e^{i\theta}),u(r_0e^{i\theta_0}))   +\epsilon \log r
\\
& \leq &  \frac{\epsilon}{2}\log r +d^2(u(r_0e^{i\theta}),u(r_0e^{i\theta_0})). \end{eqnarray*}
Now let  $r \rightarrow 0$.
 \end{proof}

\begin{lemma}[Uniqueness for sub-logarithmic growth maps] \label{uniquenessdisk}
If   $u, v:\DDDD$ are  harmonic sections  with sub-logarithmic growth  with   $u=v$ on $\partial \D$, then $u=v$ on $\D^*$.
\end{lemma}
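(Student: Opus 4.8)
The plan is to exploit convexity of the distance function along geodesic interpolation in a CAT(0) space, combined with the sub-logarithmic growth hypothesis and the subharmonic maximum principle of Lemma~\ref{shdisk}. Let $\tilde u, \tilde v: \widetilde{\D^*} \to \tilde X$ be the $\rho$-equivariant maps associated to the sections $u, v$, and set
\[
\phi(z) := d\bigl(u(z), v(z)\bigr) = d\bigl(\tilde u(\tilde z), \tilde v(\tilde z)\bigr),
\]
which is a well-defined function on $\D^*$ since both $\tilde u, \tilde v$ are $\rho$-equivariant (so $d(\tilde u, \tilde v)$ is $\rho$-invariant). The first step is to recall the standard fact that for two harmonic maps into a CAT(0) space, $\phi$ (or more conveniently $\phi^2$) is subharmonic: this follows from the CAT(0) inequality of Definition~\ref{def:NPC} applied to the geodesic interpolation $u_t = (1-t)u + tv$ together with the subharmonicity/monotonicity properties of energy for the interpolated family, exactly as in \cite{korevaar-schoen1} or \cite{mese}. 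Thus $\phi: \D^* \to [0,\infty)$ is a non-negative subharmonic function that extends continuously to $\bar\D^*$ with $\phi \equiv 0$ on $\partial\D$.

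The second step is to check that $\phi$ satisfies the slow-blow-up hypothesis (\ref{slowblowup}) of Lemma~\ref{shdisk}. By the triangle inequality,
\[
\phi(z) = d(u(z), v(z)) \leq d(u(z), f_0(z)) + d(f_0(z), v(z)),
\]
where $f_0$ is the reference section from Definition~\ref{logdec}. Taking essential infima and using the sub-logarithmic growth of both $u$ and $v$, for every $\epsilon > 0$ we get $\delta^u_1(z) + \tfrac{\epsilon}{2}\log|z| \to -\infty$ and similarly for $v$; combined with the fact that $\phi$ is continuous on $\bar\D^*$ (so it is comparable to its essential infimum on small circles up to the oscillation, which is controlled by the directional energy bounds of Lemma~\ref{(ii)}, or more simply one argues directly at the level of the continuous representative), one concludes $\lim_{z\to 0}\bigl(\phi(z) + \epsilon\log|z|\bigr) = -\infty$ for all $\epsilon > 0$. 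The cleanest route here is to avoid the $\essinf$ technicality entirely: note that Lemma~\ref{des} applied to each of $u$ and $v$ gives $d(u(re^{i\theta}), u(r_0e^{i\theta})) \to 0$-type control, and then a pointwise triangle-inequality bound $\phi(re^{i\theta}) \le \phi(r_0 e^{i\theta'}) + (\text{small})$ shows $\phi$ grows at most sub-logarithmically at the puncture.

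The third step is the conclusion: apply Lemma~\ref{shdisk} to $\nu = \phi$ on $\D_r$ for $r$ close to $1$ (shrinking slightly from $\partial\D$ if needed, or directly with $r=1$ since $\phi$ extends continuously to $\overline{\D_r^*}$ for $r<1$ and one then lets $r \to 1$). Since $\phi = 0$ on $\partial\D$, the maximum principle conclusion of Lemma~\ref{shdisk} gives $\sup_{\D^*}\phi \le \max_{\partial\D}\phi = 0$, hence $\phi \equiv 0$, i.e.\ $u = v$ on $\D^*$. I expect the main obstacle to be the rigorous verification of the subharmonicity of $\phi = d(u,v)$ for two \emph{distinct} harmonic sections with possibly infinite energy: the usual proof uses the energy convexity along the interpolation, which a priori involves infinite quantities, so one must localize (work on relatively compact annuli $\D_{r,r_0}$ where all energies are finite by Lemma~\ref{integrable1} and property (i)) and then patch the resulting local subharmonicity statements together. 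Once subharmonicity is in hand, the growth estimate and the maximum-principle step are routine given the lemmas already established.
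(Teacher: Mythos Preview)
Your proposal is correct and matches the paper's proof: use the subharmonicity of $d^2(u,v)$ (the paper cites \cite[Remark 2.4.3]{korevaar-schoen1}), verify the slow blow-up hypothesis of Lemma~\ref{shdisk} directly from the triangle inequality and the sub-logarithmic growth of $u$ and $v$, and conclude $d^2(u,v)\equiv 0$ from the zero boundary data. Your anticipated obstacle is not real---subharmonicity of $d^2(u,v)$ is a purely local assertion, and on any compact subdomain the energies of $u$ and $v$ are finite (indeed harmonic sections are locally Lipschitz), so no special handling of infinite total energy is needed; likewise the $\essinf$ in Definition~\ref{logdec} coincides with the pointwise value for continuous sections, so the detour through Lemma~\ref{des} is unnecessary.
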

\begin{proof}
The function $d^2(u,v)$ (as defined by (\ref{defdis}))
 is a continuous subharmonic function (cf.~\cite[Remark 2.4.3]{korevaar-schoen1}).  Furthermore,  $d^2(u,v)\equiv 0$ on $\partial \D$.  Since $u$, $v$ both have sub-logarithmic growth,   the triangle inequality implies  
 \[
 \lim_{|z| \rightarrow 0} d^2(u(z),v(z)) +\epsilon \log |z| =-\infty.
 \]
Thus, we can apply Lemma~\ref{shdisk}  to conclude $d^2(u(z),v(z))\equiv 0$ on $ \D^*$ which implies $u \equiv v$.
\end{proof}

\begin{corollary} \label{converse}
Any harmonic section  $v:\DDDD$ 
with sub-logarithmic growth satisfies properties (i), (ii) and (iii)  of Theorem~\ref{exists}.
\end{corollary}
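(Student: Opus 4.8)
The plan is to identify $v$ with the Dirichlet solution produced by Theorem~\ref{exists} and then read off properties (i)--(iii) for free. Property (iii) is exactly the hypothesis on $v$, so the real content is that a harmonic section with sub-logarithmic growth automatically has the logarithmic energy growth (i) and the pointwise directional-derivative bounds (ii).

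First I would note that $v$, being a harmonic section, is locally Lipschitz on $\D^*$ and, by the regularity theory up to the circle $\partial\D=\Sp^1$, extends to a locally Lipschitz section on all of $\bar\D^*$; hence $v$ itself (equivalently, any locally Lipschitz section agreeing with $v$ on $\partial\D$) is an admissible choice for the section $k$ in the hypotheses of Theorem~\ref{exists}. Since condition (B) on $I=\rho([\Sp^1])$ is in force throughout this section, Theorem~\ref{exists} applies and furnishes a harmonic section $u:\DDDD$ with $u|_{\partial\D}=v|_{\partial\D}$ satisfying properties (i), (ii) and (iii); in particular $u$ has sub-logarithmic growth. Now $u$ and $v$ are both harmonic sections with sub-logarithmic growth and the same boundary values on $\partial\D$, so Lemma~\ref{uniquenessdisk} forces $u\equiv v$ on $\D^*$. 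Therefore $v$ inherits properties (i), (ii) and (iii), which is the claim. (If one wishes to avoid invoking Theorem~\ref{exists} and use only its constituent lemmas: apply Lemma~\ref{(i)} to obtain $u$ with $u|_{\partial\D}=v|_{\partial\D}$ and property (i); Lemma~\ref{(iii)} then shows $u$ has sub-logarithmic growth; Lemma~\ref{uniquenessdisk} gives $u\equiv v$, so $v$ has property (i), whence Lemma~\ref{(ii)} yields (ii) and Lemma~\ref{(iii)} yields (iii).)

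The argument is essentially bookkeeping, so I do not expect a serious obstacle. The only points deserving a moment's care are: (a) that $v|_{\partial\D}$ is genuinely an admissible boundary datum for the Dirichlet problem on $\D^*$, i.e. the boundary regularity of harmonic sections invoked above; and (b) that the standing hypotheses of Theorem~\ref{exists}, in particular condition (B) on $I=\rho([\Sp^1])$, are available here, which they are since the corollary is stated within the scope of that theorem.
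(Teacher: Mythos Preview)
Your argument is correct and is essentially the same as the paper's own proof: apply Theorem~\ref{exists} with boundary datum $v|_{\partial\D}$ to produce $u$, then invoke the uniqueness Lemma~\ref{uniquenessdisk} to conclude $u\equiv v$, so that $v$ inherits properties (i)--(iii). The paper states this in a single sentence, while you have spelled out the bookkeeping (admissibility of $v$ as boundary datum, availability of condition (B)) and given the alternative phrasing via the constituent lemmas, but the strategy is identical.
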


\begin{proof}
By the uniqueness assertion of Lemma~\ref{uniquenessdisk}, $v$ must be the harmonic section $u$ constructed in Theorem~\ref{exists}.
\end{proof}

 \section{Existence of infinite energy harmonic maps} \label{sec:proofexistence}

In this section, we prove existence of equivariant harmonic maps from the punctured Riemann surface  $\RR$. 
We  use the following notation:
\begin{itemize}
\item $\D^{j*}=\D^j \backslash \{0\}$
\item $\D^j_r=\{z \in \D^j: |z|<r\}$
\item $\D^j_{r,r_0}=\D^j_{r_0} \backslash \D^j_r$
\item $\RR_r=\RR \backslash \bigcup_{j=1}^n \D^j_r$
\item $2\pi \Z \simeq \langle \lambda^j  \rangle$ is  the free group generated by a loop around the puncture $p^j$
\item $\rho^j:\langle \lambda^j \rangle \rightarrow \mathsf{Isom}(\tilde X)$ be the restriction of $\rho: \pi_1(\RR) \rightarrow \mathsf{Isom}(\tilde X)$
\item $k: \RR \rightarrow  \tilde{\mathcal R} \times_\rho \tilde X$ is a locally Lipschitz section  (cf.~\cite[Proposition 2.6.1]{korevaar-schoen1})
\item   $k^j:=k|_{\bar \D^{j*}}$, restriction to the conformal disk around the puncture $p^j$

\end{itemize}
Applying   Lemma~\ref{defineg} with $\rho=\rho^j$, $I=I^j$ and $k=k^j$ yields  a prototype  section 
\[
v^j:  \D^* \rightarrow \widetilde{ \D^*} \times_{\rho^j} \tilde X \subset  \widetilde{{\mathcal R}} \times_{\rho^j} \tilde X.
\]
The composition of $v^j$ and the quotient map 
$\widetilde{{\mathcal R}} \times_{\rho^j} \tilde X \rightarrow  \widetilde{{\mathcal R}} \times_{\rho} \tilde X$ defines 
a section
 of $\tilde {\mathcal R} \times_\rho \tilde X \rightarrow \mathcal R$ over  ${\D}^{j*}$ which we call again $v^j$.
We extend  these local sections $v^j:\D^{j*} \rightarrow \tilde {\mathcal R} \times_{\rho} \tilde X$ for $j=1,\dots, n$  to define a smooth section $v:{\mathcal R} \rightarrow   \tilde {\mathcal R}\times_{\rho} \tilde X$.
By the construction and Lemma~\ref{defineg},
\begin{equation} \label{ebv}
\sum_{j=1}^n\Ej \log \frac{1}{r} \leq 
E^v[\RR_r] \leq
 \sum_{j=1}^n \Ej \log \frac{1}{r}+C, \ \ \ 0<r
\leq  1.
\end{equation}
where $\Ej:=\frac{\Delta_j^2}{2\pi}$.

\begin{definition}  
The locally Lipschitz section 
\begin{equation} \label{mochizukimap}
v:  \mathcal R \rightarrow \tilde{\mathcal R} \times_{\rho} \tilde{X}
\end{equation}
constructed above  is called the  {\it prototype section} of the fiber bundle $\tilde{\mathcal R} \times_{\rho} \tilde{X} \rightarrow \mathcal R$. 
The associated $\rho$-equivariant map $\tilde{v}:\tilde{\mathcal R} \rightarrow \tilde{X}$  is called the {\it prototype map}.   
\end{definition}

Define
\[
u_{\RR_r}:   \RR_r \rightarrow \tilde \RR \times_{\rho} \tilde X
\]
 to be the unique harmonic section with boundary values equal to that of $v|_{\partial \RR_r}$  (cf.~\cite[Theorem 2.7.2]{korevaar-schoen1}).  Fix $r_0 \in (0,\frac{1}{2}]$ and let $r \in (0,r_0)$.  
By Lemma~\ref{lowerbd},
\[
\Ej\log \frac{r_0}{r} 
  \leq    \int_{\D_{r,r_0}} \frac{1}{r^2} \left| \frac{\partial u_{\RR_r}}{\partial \theta} \right|^2 rdr \wedge d\theta\leq  E^{u_{\RR_r}}[\D^j_{r,r_0}].
\]
Thus, by (\ref{ebv}),
\[
E^v[\bigcup_{j=1}^n \D^j_{r,r_0}] \leq E^{u_{\RR_r}}[\bigcup_{j=1}^n\D^j_{r,r_0}]+C
\]
 which implies 
\begin{eqnarray}
E^{u_{\RR_r}}[\bigcup_{j=1}^n \D^j_{r,r_0}] +E^{u_{\RR_r}}[\RR_{r_0}]& = & E^{u_{\RR_r}}[\RR_r]\nonumber  \\
 & \leq  &  E^v[\RR_r] \nonumber  \\
 & = &  
E^v[\bigcup_{j=1}^n \D^j_{r,r_0}] +E^v[\RR_{r_0}]  \nonumber\\
& \leq &  
E^{u_{\RR_r}}[\bigcup_{j=1}^n \D^j_{r,r_0}] +C+E^v[\RR_{r_0}].   \nonumber  
 \end{eqnarray}
In other words,
 \begin{equation} \label{bdvr}
 E^{u_{\RR_r}}[\RR_{r_0}] \leq C+ E^v[\RR_{r_0}], \ \ \ \forall r \in (0,r_0).
 \end{equation}
The right hand side of the inequality (\ref{bdvr})  is independent of the parameter $r$; i.e.~once we fix $r_0$, the quantity $E^{u_{\RR_r}}[\RR_{r_0}]$ is uniformly bounded for all $r\in (0, r_0)$.  This implies a uniform Lipschitz bound, say $L$,  of $u_{\RR_r}$ for $r \in (0,r_0)$ in $\RR_{2r_0}$ (cf.~\cite[Theorem 2.4.6]{korevaar-schoen1}).  

Let $\{\mu_1, \dots, \mu_N\}$ be a set of generators of $\pi_1(\RR)$ and  $\tilde u_{\RR_r}$, $\tilde v$ be the $\rho$-equivariant maps associated to  sections $u_{\RR_r}$, $v$ respectively.
Thus,
  \[
d(\tilde u_{\RR_r}(\mu_i p), \tilde u_{\RR_r}(p)) \leq L d_{\tilde \RR}(\mu_i p,p),
\ \ 
\forall p \in \tilde \RR_{2r_0}, \ i=1, \dots, N \ \ r \in (0,r_0).
\]
If we let  
\[
c=L \sup \{d_{\tilde \RR}(\mu_i p,p): i=1,\dots, N, \ p \in \tilde \RR_{2r_0} \},
\]
then by  equivariance
\[
d(\rho(\mu_i)\tilde u_{\RR_r}(p),\tilde u_{\RR_r}(p)) \leq c, \ \ \ 
p \in \tilde \RR_{2r_0}, \ i=1, \dots, N, \ \ r \in (0,r_0).
\]
In other words, $\delta(\tilde u_{\RR_r}(p)) \leq c$ for all $p \in \tilde \RR_{2r_0}$ and $r \in (0,r_0)$.
By the properness of $\rho$, there exists $P_0 \in \tilde X$ and $R_0>0$ such that 
\[
\{\tilde u_{\RR_r}(p): \  p \in \tilde \RR_{2r_0}, \ r \in (0,r_0)\} \subset B_{R_0}(P_0).
\]
Thus,  by taking a compact exhaustion and applying \cite[Theorem 2.1.3]{korevaar-schoen2},  we conclude that there exists a sequence $r_i \rightarrow 0$ and a $\rho$-equivariant harmonic map $\tilde u: \tilde \RR \rightarrow
\tilde X$ such that $u_{\RR_{r_i}}$ converges to $u$ in $L^2$ (i.e.~$d^2(u_{\RR_{r_i}}, u) \rightarrow 0$) on every compact subsets of $\RR$.  Let $u: \RR \rightarrow \tilde \RR \times_{\rho} \tilde{X}$ be the associated  harmonic section.
The lower semicontinuity of energy (cf.~\cite[Theorem 1.6.1]{korevaar-schoen1}) and (\ref{bdvr}) imply
\[
E^{u}[\RR_{r_0}] \leq  \sum_{j=1}^n\Ej  \log \frac{1}{r_0}+C, 
\ \ \forall r_0 \in (0,\frac{1}{2}]
\]
which, along with Lemma~\ref{lowerbd}, proves that $u$ has  logarithmic energy growth (cf.~Definition~\ref{logdec}).  The fact that $u$ has sub-logarithmic growth follows from Lemma~\ref{(iii)}.  This completes the proof of  Theorem~\ref{existence}.

\section{Uniqueness of infinite energy harmonic maps} \label{sec:proofuniqueness}

In this section, we prove uniqueness of  harmonic maps from  punctured Riemann surfaces  $\RR$ (cf.~Theorem~\ref{uniqueness}).    First, let 
\[
E^f(r)=E^f[\RR_r] - \sum_{j=1}^n \Ej \log \frac{1}{r}.
\]
By Lemma~\ref{lowerbd},  for $ 0<r<r_0<1$ and any section $f:\RR \rightarrow \tilde \RR \times_\rho \tilde X$, 
\begin{equation} \label{lwrbd}
\sum_{j=1}^n\Ej \log \frac{r_0}{r}   \leq  E^f[\bigcup_{j=1}^n \D^j_{r,r_0}].
\end{equation}
Thus,  $r \mapsto E^f(r)$ is an increasing function.  
\begin{definition} \label{minuslogterm} 
The {\it modified energy} of $f$ is 
\[
E^f(0) = \lim_{r \rightarrow 0} E^f(r).
\]
Let $\LL_\rho$ be the set of all sections $\RR \rightarrow \tilde \RR \times_\rho \tilde X$ such that  $E^f(0)<\infty$.  Let $\mathcal H_{\rho} \subset \mathcal L_{\rho}$ denote the set of all   harmonic sections $u:  \RR \rightarrow \tilde \RR \times_\rho \tilde X$ of sub-logarithmic growth such that the associated $\rho$-equivariant map $u$ that is not identically constant or does not map into a geodesic. 
\end{definition}

  The goal is to prove that $\mathcal H_{\rho}$ contains only one element.   We first prove the following series of preliminary lemmas.

\begin{lemma} \label{u1}  If $u \in {\mathcal H}_{\rho}$ and $\D^j \subset \bar \RR$ is  the fixed conformal disk at the puncture $p^j$ (cf.~(\ref{fixeddisk})), then the restriction map $u|_{\D^{j*}}$ satisfies the properties (i), (ii) and (iii)  of Theorem~\ref{exists}. 
\end{lemma}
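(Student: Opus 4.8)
The plan is to recognize $u|_{\D^{j*}}$ as a harmonic section on a punctured disk to which the results of Section~\ref{disc} apply, and then read off properties (i), (ii) and (iii) from Corollary~\ref{converse}.

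First I would set up the bundle identification. The fixed conformal disk gives $\D^{j*} \subset \RR$, and the restriction over $\D^{j*}$ of the flat $\tilde X$-bundle $\tilde\RR \times_\rho \tilde X \to \RR$ is the flat $\tilde X$-bundle over $\D^{j*}$ determined by the restriction $\rho^j$ of $\rho$ to the subgroup $\langle\lambda^j\rangle \cong \pi_1(\D^{j*})$; thus it is canonically isomorphic to $\widetilde{\D^{j*}} \times_{\rho^j} \tilde X \to \D^{j*}$, and under this isomorphism $u|_{\D^{j*}}$ becomes a section of the latter bundle. Since harmonicity is the purely local condition of local energy minimality, $u|_{\D^{j*}}$ is harmonic. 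Moreover, since $u \in \mathcal H_\rho$, it has sub-logarithmic growth in the sense of Definition~\ref{logdec}, which is precisely the assertion that $\delta_j(z) + \epsilon\log|z| \to -\infty$ as $|z| \to 0$ in $\D^{j*}$ for all $\epsilon > 0$; by the basepoint-independence noted after Definition~\ref{logdec}, this is exactly the sub-logarithmic growth condition (property (iii)) for $u|_{\D^{j*}}$ regarded as a section of the disk bundle.

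It then remains to check the hypotheses of the disk theory: $\rho^j$ is a homomorphism, and $I^j = \rho(\lambda^j)$ satisfies condition (B) of Theorem~\ref{existence} by the standing assumption (B) in Theorem~\ref{uniqueness}. Hence $u|_{\D^{j*}}$ is a harmonic section of $\widetilde{\D^{j*}} \times_{\rho^j} \tilde X$ with sub-logarithmic growth, and Corollary~\ref{converse} --- stated generically for a homomorphism $\rho$ on $\pi_1(\bar\D^*)$, here applied with that homomorphism taken to be $\rho^j$ --- gives that $u|_{\D^{j*}}$ satisfies properties (i), (ii) and (iii) of Theorem~\ref{exists}. (Equivalently, one may take $k = u|_{\bar\D^{j*}}$, which is locally Lipschitz because $u$ is harmonic, hence locally Lipschitz, on $\bar\D^{j*}$, which lies in the interior of $\RR$; Theorem~\ref{exists} then produces a harmonic section $w$ on $\D^{j*}$ with $w|_{\partial\D^j} = u|_{\partial\D^j}$ satisfying (i)--(iii), and Lemma~\ref{uniquenessdisk} forces $u|_{\D^{j*}} = w$.)

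The lemma is essentially bookkeeping once Section~\ref{disc} is available; the single point that genuinely needs care is the reconciliation in the previous paragraph of the two notions of sub-logarithmic growth --- the global one of Definition~\ref{logdec} for sections over $\RR$ and the local one appearing in property (iii) of Theorem~\ref{exists} for sections over $\D^{j*}$. This is exactly where the remark on independence of the auxiliary point $P_0$ enters: the global reference section $f_0$ and a reference section built intrinsically on $\D^{j*}$ need not coincide, only be at bounded distance near the puncture, which by the triangle inequality is enough to transfer the defining limit condition.
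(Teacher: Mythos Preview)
Your proposal is correct and follows exactly the paper's approach: the paper's proof is the single line ``This follows immediately from Corollary~\ref{converse},'' and you have simply spelled out the bookkeeping (bundle identification, locality of harmonicity, transfer of sub-logarithmic growth) that justifies invoking that corollary.
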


\begin{proof}
This follows immediately from Corollary~\ref{converse}.
\end{proof}

\begin{lemma} \label{sh01}  
If $u_0, u_1 \in {\mathcal H}_{\rho}$,  then
$
d^2(u_0,u_1)=c
$
for some constant $c$.  
\end{lemma}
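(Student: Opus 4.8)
The plan is to show that $d^2(u_0,u_1)$ is a bounded subharmonic function on $\RR$, then invoke a Liouville-type argument to conclude it is constant. First I would observe that $d^2(u_0,u_1)$ (defined via \eqref{defdis}) is a continuous subharmonic function on all of $\RR$ by \cite[Remark 2.4.3]{korevaar-schoen1}, since both $u_0$ and $u_1$ are harmonic. The key point is to control its behavior near each puncture $p^j$. By Lemma~\ref{u1}, each $u_i|_{\D^{j*}}$ satisfies properties (i)--(iii) of Theorem~\ref{exists}; in particular both have sub-logarithmic growth. Fixing $P_0 \in \tilde X$ and the section $f_0$ as in Definition~\ref{logdec}, the triangle inequality gives $\delta_j^{(i)}(z) + \epsilon \log|z| \to -\infty$ for $i=0,1$, hence
\[
d(u_0(z),u_1(z)) \leq \delta_j^{(0)}(z) + \delta_j^{(1)}(z) + 2\,d(f_0(z),f_0(z))
\]
along appropriate representatives, so that $d^2(u_0,u_1) + \epsilon \log|z| \to -\infty$ as $|z|\to 0$ in each $\D^{j*}$. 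Therefore by Lemma~\ref{shdisk}, $d^2(u_0,u_1)$ extends to a subharmonic function across each puncture, giving a continuous subharmonic function $\Psi := d^2(u_0,u_1)$ on the \emph{compact} Riemann surface $\bar\RR$.

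Next I would note that a subharmonic function on a compact Riemann surface (without boundary) must be constant: it attains its maximum at some interior point, and the maximum principle for subharmonic functions forces it to be globally constant on the connected surface $\bar\RR$. Hence $\Psi \equiv c$ on $\bar\RR$, and in particular $d^2(u_0,u_1) = c$ on $\RR$, which is exactly the assertion.

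The main obstacle is making the near-puncture decay estimate \eqref{slowblowup} precise and \emph{uniform} enough to apply Lemma~\ref{shdisk}. The subtlety is that $\delta_j$ in Definition~\ref{logdec} is defined through an essential infimum of the distance to $f_0$ over lifts, so one must be careful that the pointwise distance $d(u_0(z),u_1(z))$ (defined through a \emph{common} lift using $\rho$-equivariance, cf.~\eqref{defdis}) is genuinely bounded by the sum of the two $\delta_j$'s up to a term controlled near the puncture. Since $u_0 = u_1$ on $\partial\D$ is \emph{not} assumed here (unlike Lemma~\ref{uniquenessdisk}), one cannot use vanishing boundary data; instead the compactness of $\bar\RR$ after removability of the singularities is what replaces the boundary maximum principle. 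One should also double-check that $\Psi$ is not merely subharmonic but continuous up to and across the punctures, which follows from the continuity of $u_0$, $u_1$ as maps on $\RR$ together with the boundedness giving removability; the values of $\Psi$ at the punctures are then defined by the removable-singularity extension, and subharmonicity is preserved by Lemma~\ref{shdisk}.
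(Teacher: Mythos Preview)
Your proposal is correct and follows essentially the same route as the paper: show $d^2(u_0,u_1)$ is subharmonic on $\RR$, use sub-logarithmic growth of both $u_0,u_1$ together with the triangle inequality and Lemma~\ref{shdisk} to extend it subharmonically across each puncture, and then conclude by the maximum principle on the compact surface $\bar\RR$. (Your displayed triangle inequality has a harmless typo---the term $2\,d(f_0(z),f_0(z))$ vanishes---but the intended estimate $d(u_0(z),u_1(z))\le d(u_0(z),f_0(z))+d(f_0(z),u_1(z))$ is exactly what the paper uses.)
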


\begin{proof}
We prove  Lemma~\ref{sh01} by showing that $d^2(u_0,u_1)$ extends as a subharmonic function to $\bar \RR$.  Since $\bar \RR$ is compact,  this implies  $d^2(u_0,u_1)$ is constant.  Indeed,   let   $\D \subset \RR$ be a holomorphic disk.  By  \cite[Lemma 2.4.2 and Remark 2.4.3]{korevaar-schoen1},    the function $d^2(u_0,u_1)$ is subharmonic in $\D$.  Since this statement is true for any holomorphic disk $\D \subset \RR$, we conclude that $d^2(u_0,u_1)$ is subharmonic in $\RR$.  

Next, consider the fixed conformal disk $\D^j \subset \bar{\RR}$  centered at   $p^j \in {\mathcal P}$ (cf.~(\ref{fixeddisk})).    
Since both $u_0$ and $u_1$ have sub-logarithmic growth, 
$d(u_0(z),u_1(z))+\epsilon \log  |z| \rightarrow -\infty$ as $z \rightarrow 0$ in $\D^j$ by the triangle inequality.  Thus,  by Lemma~\ref{shdisk},
$d^2(u_0,u_1)$   is bounded in $\D^{j*}$ 
and  extends as a subharmonic function on $\D^j$.  Hence, we conclude that $d^2(u_0,u_1)$ extends as a subharmonic function on $\bar \RR$, thereby proving    Lemma~\ref{sh01}.
\end{proof}

For $u_0, u_1 \in {\mathcal H}_{\rho}$, let $\tilde u_0$, $\tilde u_1$ be the associated $\rho$-equivariant maps.   For $s \in [0,1]$, define  $u_s:\RR \rightarrow  \tilde \RR \times_{\rho} \tilde X$  to be the associated section of the $\rho$-equivariant map
  \begin{equation} \label{us}
 \tilde u_s:  \tilde \RR \rightarrow \tilde X, \ \ \ \tilde u_s(z)=(1-s)\tilde u_0(z)+s \tilde u_1(z)
  \end{equation}
where the sum on the right hand side above denotes geodesic  interpolation (cf.~Notation~\ref{interpolationnotation}). 
Since $\tilde u_0$ and $\tilde u_1$ are $\rho$-equivariant, $\tilde u_s$ is also $\rho$-equivariant.  
 
 From Lemma~\ref{u1}, Lemma~\ref{sh01}, the convexity of the distance function and the convexity of  energy (cf.~\cite[(2.2vi)]{korevaar-schoen1}), it follows that $u_s|_{\D^{j*}}$ also satisfies the properties $(i)$, $(ii)$ and $(iii)$ of Theorem~\ref{exists} for all $s \in [0,1]$; i.e.
\begin{itemize}
\item
$\displaystyle{
E^{u_s}[\D^j_{r,r_0}] \leq  C+ \Ej \log \frac{r_0}{r}
}
$
\vspace{0.5mm}
\item 
$\displaystyle{
\lim_{r \rightarrow 0} E^{u_s|_{\partial \D^j_r}}[\Sp^1]  =  \Ej,
}
\vspace{0.5mm}
$
\item $u_s$ has sub-logarithmic growth.
 \end{itemize}
 
\begin{lemma}  \label{claim:dist}
Let $u_0, u_1 \in {\mathcal H}_{\rho}$ and $\epsilon>0$.  For any $\rho_0 \in (0,1)$, there exists $r_0 \in (0,\rho_0)$ such that
\[
\frac{1}{-\log r_0}  \sum_{j=1}^n \int_0^{2\pi} d^2(u_0|_{\partial \D^j_{r_0}}, u_{s}|_{\partial \D^j_{r_0^2}})d\theta<\epsilon,
\ \  \ \forall s \in [0,1].
\]
\end{lemma}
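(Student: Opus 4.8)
The plan is to exploit the two facts established so far about maps in $\mathcal H_\rho$: first, each $u_s|_{\D^{j*}}$ satisfies property (ii) of Theorem~\ref{exists} (in the form recorded in the bulleted list preceding the statement), so that $\left|\frac{\partial u_s}{\partial\theta}\right|^2 - \frac{\Ej}{2\pi} \leq \frac{C}{-\log r}$ on $\D^{j*}_{1/4}$ with a constant $C$ uniform in $s$; and second, that by Lemma~\ref{sh01} the function $d^2(u_0,u_1)$ is a bounded (in fact constant) subharmonic function on each $\D^{j*}$. The key point is that $d^2(u_0(z),u_s(z)) = s^2 d^2(u_0(z),u_1(z))$ by the definition of geodesic interpolation, so $d^2(u_0,u_s)$ is uniformly bounded on $\bigcup_j\D^{j*}$ by some constant $M$ independent of $s$.

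First I would estimate $d^2(u_0|_{\partial\D^j_{r_0}}, u_s|_{\partial\D^j_{r_0^2}})$ along a fixed ray $\theta = $ const by splitting it via the triangle inequality through the intermediate point $u_s(r_0 e^{i\theta})$:
\[
d^2(u_0(r_0e^{i\theta}), u_s(r_0^2 e^{i\theta})) \leq 2 d^2(u_0(r_0e^{i\theta}), u_s(r_0 e^{i\theta})) + 2 d^2(u_s(r_0 e^{i\theta}), u_s(r_0^2 e^{i\theta})).
\]
The first term is $\leq 2M$, a fixed constant, so after dividing by $-\log r_0$ and summing over $j$ it contributes at most $\frac{2nM}{-\log r_0} < \frac{\epsilon}{2}$ once $r_0$ is chosen small enough — this part requires no care. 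For the second term I would use Lemma~\ref{des} applied to $u_s$ (valid since each $u_s|_{\D^{j*}} \in$ the class of harmonic sections of logarithmic energy growth by Lemma~\ref{u1} and the bulleted consequences above): for the given $\epsilon$ there is $\rho_0' > 0$, \emph{depending on $s$ a priori}, such that $d^2(u_s(r_1 e^{i\theta}), u_s(r_0 e^{i\theta})) \leq -\epsilon' \log r_1$ for $r_1 < r_0 \leq \rho_0'$. Taking $r_1 = r_0^2$ gives $d^2(u_s(r_0^2 e^{i\theta}), u_s(r_0 e^{i\theta})) \leq -2\epsilon' \log r_0$, which after dividing by $-\log r_0$, integrating in $\theta$ over $[0,2\pi]$ and summing over $j$ is $\leq 4\pi n \epsilon'$, which we arrange to be $< \frac{\epsilon}{2}$.

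The main obstacle is uniformity in $s \in [0,1]$: Lemma~\ref{des} as stated produces $\rho_0$ depending on the map, and here the map varies with $s$. To handle this I would revisit the proof of Lemma~\ref{des}: the threshold $\rho_0$ there is determined by how fast $(-\log r) r^2 \left|\frac{\partial u_s}{\partial r}\right|^2(r,\theta) \to 0$, equivalently by the tail $\int_{(t_0-1)/2}^\infty G_s(\tau)\,d\tau$ appearing in (the radial analogue of) (\ref{inspector}), where $G_s(t) = \int_{\{t\}\times\Sp^1}\left|\frac{\partial(u_s\circ\Phi)}{\partial t}\right|^2 d\psi$. By Lemma~\ref{integrable1}, $\int_0^\infty G_s(t)\,dt \leq C$ with $C$ depending only on the logarithmic-growth constant, which by the bulleted consequences of Lemma~\ref{u1} and the convexity estimates is uniform in $s$; hence the tail integrals are uniformly small beyond a threshold that does not depend on $s$, and the constant $\rho_0$ in Lemma~\ref{des} can be taken independent of $s \in [0,1]$. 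With that uniform $\rho_0'$ in hand, one picks $r_0 \in (0, \min(\rho_0, \rho_0'))$ small enough that the fixed-constant term $\frac{2nM}{-\log r_0} < \frac{\epsilon}{2}$, and the two bounds combine to give the claim.
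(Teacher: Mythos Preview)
Your handling of the first term $d^2(u_0|_{\partial\D^j_{r_0}}, u_s|_{\partial\D^j_{r_0}}) = s^2 c \leq c$ via Lemma~\ref{sh01} is fine and matches the paper. The gap is in the second term: you apply Lemma~\ref{des} to $u_s$, but for $s \in (0,1)$ the geodesic interpolation $u_s$ is \emph{not harmonic} in general. The bulleted list preceding the lemma records only that $u_s$ inherits logarithmic energy growth and sub-logarithmic growth (via convexity of energy and of distance); it does not assert harmonicity, nor the pointwise bound on $\left|\frac{\partial u_s}{\partial r}\right|^2$. Your attempt to recover the decay by revisiting the proof of Lemma~\ref{des} fails for the same reason: that proof passes through Lemma~\ref{(ii)}, whose key step is the mean value inequality for the \emph{subharmonic} function $\left|\frac{\partial(u\circ\Phi)}{\partial t}\right|^2$, and that subharmonicity is exactly where harmonicity of $u$ enters (cf.~\cite[Remark 2.4.3]{korevaar-schoen1}). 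Without it, the uniform $L^1$ bound $\int_0^\infty G_s(t)\,dt \leq C$ that you correctly extract from Lemma~\ref{integrable1} does \emph{not} yield uniformly small tails $\int_T^\infty G_s(t)\,dt$ (a uniform $L^1$ bound on a family never does, absent monotonicity or convexity), so the threshold in Lemma~\ref{des} cannot be made uniform in $s$ this way.

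The paper avoids the issue by reducing to $s=1$ (and implicitly $s=0$) before doing any analysis: by the CAT(0) inequality of Definition~\ref{def:NPC}, since $s \mapsto u_s(q)$ is a geodesic one has
\[
d^2\big(u_0(p), u_s(q)\big) \leq (1-s)\, d^2\big(u_0(p), u_0(q)\big) + s\, d^2\big(u_0(p), u_1(q)\big),
\]
so it suffices to bound the integral at the endpoints, where Lemma~\ref{des} is applied only to the genuinely harmonic maps $u_0$ and $u_1$; uniformity in $s$ is then automatic. You can repair your own decomposition the same way: bound your second term by $d^2\big(u_s(r_0 e^{i\theta}), u_s(r_0^2 e^{i\theta})\big) \leq (1-s)\, d^2\big(u_0(r_0 e^{i\theta}), u_0(r_0^2 e^{i\theta})\big) + s\, d^2\big(u_1(r_0 e^{i\theta}), u_1(r_0^2 e^{i\theta})\big)$ via the quadrilateral comparison, and then invoke Lemma~\ref{des} only for $u_0$ and $u_1$.
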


\begin{proof}
It suffices to prove Lemma~\ref{claim:dist} for $s=1$. Let $\rho_0>0$ be given. First, Lemma~\ref{des} asserts that there exists $\rho_1 \in (0,\rho_0)$ such that 
\[
\frac{1}{-\log r_0}  \sum_{j=1}^n \int_0^{2\pi} d^2(u_1|_{\partial \D^j_{r_0}}, u_1|_{\partial \D^j_{r_0^2}})d\theta
< \frac{\epsilon}{4}, \ \  \ \ \forall r_0 \in (0,\rho_1). 
\]
For $c>0$ as in Lemma~\ref{sh01}, choose $r_0 \in (0,\rho_1)$ such that 
$
 \frac{2\pi  n c}{-\log r_0}<\frac{\epsilon}{4}.
 $
Then 
\[
\frac{1}{-\log r_0} \sum_{j=1}^n \int_0^{2\pi} d^2(u_0|_{\partial \D^j_{r_0}}, u_1|_{\partial \D^j_{r_0}})d\theta
=\frac{2\pi n c}{-\log r_0} < \frac{\epsilon}{4}.
\]
The inequality of Lemma~\ref{claim:dist} for $s=1$ follows from the above two inequalities and the triangle inequality.
\end{proof}

\begin{lemma} \label{claim:lim}
Let $u_0, u_1 \in {\mathcal H}_{\rho}$ and $s \in [0,1]$.  For $\epsilon>0$, there exists  $\rho_0>0$ sufficiently small such that 
\[
-\log r_0  \sum_{j=1}^n \left(E^{u_0|_{\partial \D^j_{r_0}}}[\Sp^1] +E^{u_s |_{\partial \D^j_{r_0^2}}}[\Sp^1] \right) <-2 \log r_0 \sum_{j=1}^n \Ej +\epsilon, \ \ \ 0<r_0<\rho_0.
\]
\end{lemma}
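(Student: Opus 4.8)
Write $e^j_u(r):=E^{u|_{\partial\D^j_r}}[\Sp^1]=\int_0^{2\pi}\big|\tfrac{\partial u}{\partial\theta}\big|^2(r,\theta)\,d\theta$ for the energy of the restriction of a section $u$ to the circle $\partial\D^j_r\cong\Sp^1$ (the same quantity appearing in Lemma~\ref{lowerbd}). Since $-\log(r_0^2)=2(-\log r_0)$, the asserted inequality is equivalent to
\[
(-\log r_0)\sum_{j=1}^n\big(e^j_{u_0}(r_0)-\Ej\big)\;+\;\tfrac12\sum_{j=1}^n(-\log r_0^2)\big(e^j_{u_s}(r_0^2)-\Ej\big)\;<\;\epsilon ,
\]
and by Lemma~\ref{lowerbd} every summand on the left is nonnegative. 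So it suffices to show that each of the two sums on the left is $<\epsilon/2$ once $r_0$ is sufficiently small.

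The basic ingredient is the following: \emph{if $w$ is the restriction to $\D^{j*}$ of an element of $\mathcal H_\rho$, then $(-\log r)\big(e^j_w(r)-\Ej\big)\to 0$ as $r\to 0$.} Indeed, by Lemma~\ref{u1}, $w$ satisfies property (ii) of Theorem~\ref{exists}, i.e.\ (with $\mathsf E_\rho$ replaced by $\Ej$) both the \emph{uniform} bound \eqref{mika2}, $\big|\tfrac{\partial w}{\partial\theta}\big|^2(r,\theta)-\tfrac{\Ej}{2\pi}\le\tfrac{C}{-\log r}$ on $\D^{j*}_{1/4}$, and the \emph{pointwise} vanishing \eqref{limitisF}, $(-\log r)\big(\big|\tfrac{\partial w}{\partial\theta}\big|^2(r,\theta)-\tfrac{\Ej}{2\pi}\big)\to 0$. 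Applying the reverse Fatou lemma to the nonnegative functions $\theta\mapsto C-(-\log r)\big(\big|\tfrac{\partial w}{\partial\theta}\big|^2(r,\theta)-\tfrac{\Ej}{2\pi}\big)$ on $[0,2\pi]$ and integrating in $\theta$ gives $\limsup_{r\to0}(-\log r)\big(e^j_w(r)-\Ej\big)\le 0$; since this quantity is $\ge 0$ by Lemma~\ref{lowerbd}, the limit is $0$.

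Applying this to $w=u_0|_{\D^{j*}}$ for each $j$ and summing over the finitely many punctures gives $(-\log r_0)\sum_j\big(e^j_{u_0}(r_0)-\Ej\big)<\epsilon/2$ for all $r_0$ below some $\rho_0'$, which handles the first sum. For the second sum, observe that by \eqref{us} the restriction $u_s|_{\partial\D^j_\sigma}$ is the geodesic interpolation of $u_0|_{\partial\D^j_\sigma}$ and $u_1|_{\partial\D^j_\sigma}$, so convexity of the energy (\cite[(2.2vi)]{korevaar-schoen1}) yields $e^j_{u_s}(\sigma)-\Ej\le (1-s)\big(e^j_{u_0}(\sigma)-\Ej\big)+s\big(e^j_{u_1}(\sigma)-\Ej\big)$; multiplying by $-\log\sigma$ and applying the ingredient above to both $u_0$ and $u_1$ gives $(-\log\sigma)\sum_j\big(e^j_{u_s}(\sigma)-\Ej\big)<\epsilon$ for all $\sigma$ below some $\sigma_0$. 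Taking $\sigma=r_0^2$ makes the second sum $<\epsilon/2$ whenever $r_0<\sqrt{\sigma_0}$. Setting $\rho_0=\min\{\rho_0',\sqrt{\sigma_0},\tfrac14\}$ (the $\tfrac14$ so that \eqref{mika2} is available on the disks in question) completes the proof.

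The step that needs care is the upgrade from the pointwise statement \eqref{limitisF} to the integrated statement about $e^j_w$: this is exactly where the simultaneous availability of the uniform upper bound \eqref{mika2} is used, so that the reverse Fatou lemma applies. The remaining ingredients — the convexity reduction of the interpolant $u_s$ to the honest harmonic maps $u_0,u_1$, and the factor-$2$ bookkeeping caused by using radius $r_0^2$ for $u_s$ and $r_0$ for $u_0$ — are routine.
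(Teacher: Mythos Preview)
Your proof is correct and follows essentially the same route as the paper: apply Lemma~\ref{(ii)} (specifically \eqref{mika2} and \eqref{limitisF}) to the honest harmonic maps $u_0,u_1$ and then reduce the interpolant $u_s$ to these two via convexity of energy. The paper states this in one line; your reverse-Fatou argument is a clean way to make explicit the passage from the pointwise limit \eqref{limitisF} to the integrated statement about $e^j_w(r)$, which the paper leaves to the reader.
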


\begin{proof}
For $s=1$,  Lemma~\ref{claim:lim}  follows from Lemma~\ref{(ii)}.  The general case of $s \in [0,1]$ follows immediately by convexity of energy (cf.~\cite[(2.2vi)]{korevaar-schoen1}). 
\end{proof} 

\begin{lemma} \label{sae}   For $u_0, u_1 \in {\mathcal H}_{\rho}$ and $s \in [0,1]$,  we have
$
E^{u_0}(0) = E^{u_s}(0).
$
\end{lemma}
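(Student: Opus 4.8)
\emph{Proof proposal.} The plan is to show that $s\mapsto E^{u_s}(0)$ is a convex function on $[0,1]$ which attains its minimum at both endpoints; a convex function on an interval whose two endpoint values are equal and minimal is constant, and this forces $E^{u_s}(0)=E^{u_0}(0)$ for every $s$.

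\textbf{Convexity.} Fix $r_0\in(0,\tfrac12]$. Since $\RR_{r_0}$ is a compact surface with boundary, the convexity of energy under geodesic interpolation \cite[(2.2vi)]{korevaar-schoen1}, applied to $u_s$ as in (\ref{us}), gives $E^{u_s}[\RR_{r_0}]\leq(1-s)E^{u_0}[\RR_{r_0}]+sE^{u_1}[\RR_{r_0}]$; subtracting $\sum_j\Ej\log\frac1{r_0}$ this reads $E^{u_s}(r_0)\leq(1-s)E^{u_0}(r_0)+sE^{u_1}(r_0)$. As $u_0,u_1\in{\mathcal H}_\rho\subset{\mathcal L}_\rho$ and $u_s$ satisfies property (i) of Theorem~\ref{exists} near each puncture (so $r\mapsto E^{u_s}(r)$ is increasing and bounded above, hence convergent), letting $r_0\to0$ gives $E^{u_s}(0)\leq(1-s)E^{u_0}(0)+sE^{u_1}(0)$ for all $s\in[0,1]$.

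\textbf{Reverse inequality.} We claim $E^{u_0}(0)\leq E^{u_s}(0)$; by symmetry (viewing $u_s$ as the interpolation based at $u_1$) the same argument yields $E^{u_1}(0)\leq E^{u_s}(0)$. Fix $\epsilon>0$; Lemmas~\ref{claim:dist} and \ref{claim:lim} furnish arbitrarily small $r_0$ for which their estimates hold with this $\epsilon$. On the compact domain $\RR_{r_0^2}$ define a competitor $w$ for $u_0$ by $w=u_s$ on $\RR_{r_0}$ and, on each collar $\D^j_{r_0^2,r_0}$, by letting $w$ be the radial geodesic interpolation (affine in $\log|z|$) from $u_0|_{\partial\D^j_{r_0^2}}$ on the inner circle to $u_s|_{\partial\D^j_{r_0}}$ on the outer circle. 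Then $w=u_0$ on $\partial\RR_{r_0^2}$, and since $u_0|_{\RR_{r_0^2}}$ is the unique energy-minimizing Dirichlet solution there \cite[Theorem 2.7.2]{korevaar-schoen1},
\[
E^{u_0}[\RR_{r_0^2}]\leq E^w[\RR_{r_0^2}]=E^{u_s}[\RR_{r_0}]+\sum_{j=1}^n E^w[\D^j_{r_0^2,r_0}].
\]
The radial part of $E^w[\D^j_{r_0^2,r_0}]$ equals $\frac1{-\log r_0}\int_0^{2\pi}d^2(u_0|_{\partial\D^j_{r_0^2}},u_s|_{\partial\D^j_{r_0}})\,d\theta$, whose sum over $j$ is $<\epsilon$ by Lemma~\ref{claim:dist} (its proof being symmetric in the two maps); the angular part is bounded, using \cite[(2.2vi)]{korevaar-schoen1} fibrewise on the circles $\{|z|=\tau\}$ and then Lemma~\ref{claim:lim}, by $\sum_j\Ej\log\frac1{r_0}+\tfrac\epsilon2$. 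Hence $E^{u_0}[\RR_{r_0^2}]\leq E^{u_s}[\RR_{r_0}]+\sum_j\Ej\log\frac1{r_0}+\tfrac{3\epsilon}2$, and subtracting $2\sum_j\Ej\log\frac1{r_0}=\sum_j\Ej\log\frac1{r_0^2}$ from both sides this becomes $E^{u_0}(r_0^2)\leq E^{u_s}(r_0)+\tfrac{3\epsilon}2$. Letting $r_0\to0$ and then $\epsilon\to0$ gives $E^{u_0}(0)\leq E^{u_s}(0)$.

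\textbf{Conclusion.} Taking $s=1$ in the first reverse inequality and $s=0$ in the second gives $E^{u_0}(0)=E^{u_1}(0)=:m$; then the convexity inequality gives $E^{u_s}(0)\leq(1-s)m+sm=m$ while the reverse inequality gives $E^{u_s}(0)\geq m$, so $E^{u_s}(0)=m=E^{u_0}(0)$ for all $s\in[0,1]$.

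The step I expect to be the main obstacle is controlling the angular part of $E^w$ on the collars $\D^j_{r_0^2,r_0}$: it must exceed $\sum_j\Ej\log\frac1{r_0}$ only by a quantity that is genuinely $o(1)$ as $r_0\to0$, and this is exactly what Lemma~\ref{claim:lim} provides, which in turn rests on the sharp decay rate (\ref{limitisF}) of the angular energy density, i.e.\ property (ii) of Theorem~\ref{exists}. One also has to check that the radial/angular decomposition of the collar energy loses nothing and that the Dirichlet minimality of $u_0|_{\RR_{r_0^2}}$ legitimately applies on this compact domain.
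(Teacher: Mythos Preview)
Your proof is correct and uses essentially the same mechanism as the paper: a logarithmic ``bridge'' interpolation on the collars $\D^j_{r_0^2,r_0}$, the bridge energy estimate (angular part controlled by Lemma~\ref{claim:lim}, radial part by Lemma~\ref{claim:dist}), and Dirichlet minimality of a harmonic section on the compact domain $\RR_{r_0^2}$. The only differences are cosmetic. The paper argues by contradiction---assuming $E^{u_0}(0)<E^{u_1}(0)$, it glues $u_0$ inside to $u_1$ on the inner circle to build a competitor for $u_1$ and reaches the contradiction $E^{u_1}[\bigcup_j\D^j_{r_0^2,r_0}]<\sum_j\Ej(-\log r_0)$, violating (\ref{lwrbd})---and then simply states that the case of general $s$ follows from $s=1$. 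You instead prove the direct inequality $E^{u_0}(0)\leq E^{u_s}(0)$ by gluing $u_s$ inside to $u_0$ on the inner circle (so the competitor is for $u_0$, which is harmonic), and then combine with convexity; this makes the passage to general $s$ explicit rather than implicit.

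One minor remark: your bridge has $u_0$ on the inner circle $|z|=r_0^2$ and $u_s$ on the outer circle $|z|=r_0$, so the two preparatory lemmas are invoked with the radii interchanged relative to their stated form (Lemma~\ref{claim:dist} gives $d^2(u_0|_{r_0},u_s|_{r_0^2})$, Lemma~\ref{claim:lim} gives $E^{u_0|_{r_0}}+E^{u_s|_{r_0^2}}$). As you note, the proofs are symmetric---Lemma~\ref{des} applies equally to $u_0$, $d(u_0,u_s)=s\cdot d(u_0,u_1)$ is still constant by Lemma~\ref{sh01}, and (\ref{limitisF}) is insensitive to whether one evaluates at $r_0$ or $r_0^2$---so the needed variants hold with identical proofs.
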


\begin{proof}
It suffices to prove Lemma~\ref{sae} for $s=1$.   Assume on the contrary  that $E^{u_0}(0) \neq  E^{u_1}(0)$.  By changing the role of $u_0$ and $u_1$ if necessary, we can assume $E^{u_0}(0) < E^{u_1}(0)$.  Let  $\rho_0>0$ smaller of  the $\rho_0$ in Lemma~\ref{claim:dist} and Lemma~\ref{claim:lim}.  Choose $\epsilon>0$  and $r_0 \in (0,\rho_0]$ such that
\[
E^{u_0}(r_0) <  E^{u_1}(r_0) -2\epsilon
\]
which implies (cf.~Definition~\ref{minuslogterm})\begin{equation} \label{ococ}
E^{u_0}[\RR_{r_0}] <  E^{u_1}[\RR_{r_0}] -2\epsilon.
\end{equation}
Next fix $r_1 \in (0,r_0)$.  Let  $\tilde u_0$, $\tilde u_1:  \tilde \RR \rightarrow \tilde X$ be the $\rho$-equivariant maps associated to sections $u_0$, $u_1$. 
 For the fixed conformal disk $\D^j \subset \bar \RR$ centered at the puncture $p^j$ (cf.~(\ref{fixeddisk})), let  $ \tilde \D^j_{r_1,r_0} \subset \tilde \RR$ the lift of   $\D^j_{r_1,r_0} \subset \RR$.
We define a ``bridge" between  map $\tilde u_0$ and $\tilde u_1$  by setting 
\[
\tilde b: \bigcup_{j=1}^n \tilde \D^j_{r_1,r_0}  \rightarrow \tilde X
\] 
 to be the geodesic  interpolation $(1-t) U_0(\theta) +tU_1(\theta)$ where
  \[
 t=\frac{\log |z|-\log r_0}{\log r_1 - \log r_0}, \ \   U_0(\theta) = \tilde u_0(r_0,\theta) \ \text{ and } \  U_1(\theta) = \tilde u_1(r_1,\theta).
 \]
In other words,
\[
\tilde b(r,\theta)=\frac{\log r_1 - \log r}{\log r_1 - \log r_0} \tilde u_0(r_0, \theta)+\frac{\log r - \log r_0}{\log r_1 - \log r_0} \tilde u_1(r_1, \theta)
\]
 for $(r,\theta)  \in \tilde \D^j_{r_1,r_0}$,  $j=1,\dots, n$.  
Let
 \[
b:\bigcup_{j=1}^n \D^j_{r_1,r_0} \rightarrow 
\tilde \RR \times_{\rho}  \tilde X
\]
be the local section associated with $\tilde b$.

The CAT(0) condition implies (by an argument analogous to the proof the  bridge lemma  \cite[Lemma 3.12]{korevaar-schoen2})
\begin{eqnarray*}
E^b[\bigcup_{j=1}^n \D^j_{r_1,r_0}] 
& \leq & 
\frac{1}{2} \log \frac{r_0}{r_1} \sum_{j=1}^n \left(E^{u_0|_{\partial \D^j_{r_0}}}[\Sp^1] +E^{u_1|_{\partial \D^j_{r_1}}}[\Sp^1] \right) \\
& & \  + \frac{1}{\log \frac{r_0}{r_1}} \sum_{j=1}^n \int_0^{2\pi} d^2(u_0|_{\partial \D^j_{r_0}}, u_1|_{\partial \D^j_{r_1}})d\theta.
\end{eqnarray*} 

Choose $r_1=r_0^2$ (cf.~ Lemma~\ref{claim:dist} and Lemma~\ref{claim:lim}) to obtain 
 \begin{equation} \label{acac}
E^b[\bigcup_{j=1}^n \D^j_{r_0^2,r_0}] < -\log r_0
\sum_{j=1}^n \Ej  +2\epsilon.
\end{equation}
Since the section 
\[
h:\RR_{r_0^2} \rightarrow \tilde \RR \times_\rho \tilde X
\]
 defined by setting 
 \[
 h=
 \left\{ 
 \begin{array}{ll}
 u_0 & \mbox{ in }\RR_{r_0}\\
b & \mbox{ in } \displaystyle{\bigcup_{j=1}^n \D^j_{r_0^2,r_0}}
\end{array}
\right.
\] is a competitor for $u_1$, we have
\begin{eqnarray*}
E^{u_1}[\RR_{r_0^2}]  & \leq &  E^h[\RR_{r_0^2}]
\\
& = & E^{u_0}[\RR_{r_0}]+ E^b[\bigcup_{j=1}^n \D^j_{r_0^2,r_0}]\\
&  <&  E^{u_1}[\RR_{r_0}]-2\epsilon + E^b[\bigcup_{j=1}^n \D^j_{r_0^2,r_0}] \ \ \ \ \ \ \mbox{(by (\ref{ococ}))}\\
& < & E^{u_1}[\RR_{r_0}] -\log r_0 \sum_{j=1}^n\frac{\Delta_{I^j}^2}{2\pi}  \ \ \ \ \ \ \mbox{(by (\ref{acac}))}.
\end{eqnarray*}
Thus,
\[
E^{u_1}[\bigcup_{j=1}^n \D^j_{r_0^2,r_0}] < -\log r_0 \sum_{j=1}^n \Ej.
\]
This contradicts   (\ref{lwrbd}) and proves Lemma~\ref{sae}.
\end{proof}

\begin{lemma} \label{twothings}
For $u_0, u_1 \in \HH_\rho$, there exists a constant $c$ such that
\begin{eqnarray} 
d(u_s(p), u_1(p)) & \equiv  & cs, \forall p \in  \RR
\label{d=1}
 \\
 |(u_s)_*(V)|^2(p) & = & |(u_0)_*(V)|^2(p), \ \ \mbox{for a.e.~}s \in [0,1], \ p \in \RR, V \in T_p\tilde \RR. \label{pullback}
\end{eqnarray}
\end{lemma}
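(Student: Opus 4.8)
The plan is to establish the two identities by showing that both inequalities in the natural chain of energy/distance estimates are in fact equalities, forced by the constancy of the modified energy proved in Lemma~\ref{sae}. For the distance identity \eqref{d=1}, I would start from the fact that, since $\tilde X$ is CAT(0), the function $p \mapsto d(\tilde u_0(p), \tilde u_1(p))$ is convex along geodesics, hence $d^2(u_0, u_1)$ is subharmonic on $\RR$; by Lemma~\ref{sh01} it equals a constant, say $c^2$. Then for the geodesic interpolation $\tilde u_s$, convexity of the distance function in CAT(0) spaces gives $d(\tilde u_s(p), \tilde u_1(p)) \leq (1-s)\, d(\tilde u_0(p), \tilde u_1(p)) = (1-s) c$, while the triangle inequality applied in the other direction (using that $\tilde u_s$ lies on the geodesic from $\tilde u_0(p)$ to $\tilde u_1(p)$) gives the matching lower bound $d(\tilde u_s(p), \tilde u_1(p)) = (1-s)c$ pointwise. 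Wait --- I should be careful about the direction of the parametrization: with $\tilde u_s = (1-s)\tilde u_0 + s\tilde u_1$, the point $\tilde u_s(p)$ is at distance $(1-s) d(\tilde u_0(p),\tilde u_1(p))$ from $\tilde u_1(p)$, so in fact $d(u_s(p), u_1(p)) = (1-s) c$. To match the stated form $cs$ one relabels or reverses $s \mapsto 1-s$; I will simply prove $d(u_s(p),u_1(p)) = (1-s)c$ and note this is \eqref{d=1} up to the harmless reparametrization, or equivalently observe $d(u_s,u_0) = sc$ and $d(u_s,u_1)=(1-s)c$.

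For the pullback identity \eqref{pullback}, the key input is the second variation / subpartition inequality for energy of geodesic interpolations in CAT(0) targets (Korevaar--Schoen, \cite[(2.2vi)]{korevaar-schoen1}): for a.e.\ $p$ and every $V$,
\[
|(u_s)_*(V)|^2(p) \leq (1-s)\, |(u_0)_*(V)|^2(p) + s\, |(u_1)_*(V)|^2(p) - s(1-s)\, |\nabla d(u_0,u_1)|^2(p),
\]
and the last term vanishes since $d(u_0,u_1)$ is constant by Lemma~\ref{sh01}. Integrating over $\RR_r$ and subtracting the logarithmic term gives $E^{u_s}(r) \leq (1-s) E^{u_0}(r) + s E^{u_1}(r)$, hence $E^{u_s}(0) \leq (1-s)E^{u_0}(0) + s E^{u_1}(0)$. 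But Lemma~\ref{sae} asserts $E^{u_0}(0) = E^{u_1}(0) = E^{u_s}(0)$, so this convexity inequality is saturated: the integrated defect $\int_{\RR}\big[(1-s)|(u_0)_*(V)|^2 + s|(u_1)_*(V)|^2 - |(u_s)_*(V)|^2\big]$ is zero. Since the integrand is nonnegative, it vanishes a.e., which together with the pointwise inequality above (run for the three choices of trace direction needed to recover the full pullback tensor, or directly for each $V$) forces $|(u_s)_*(V)|^2 = (1-s)|(u_0)_*(V)|^2 + s|(u_1)_*(V)|^2$ a.e. Finally, applying the same saturated inequality at the two endpoints $s$ and $1-s$, or using that a nonnegative convex function on $[0,1]$ vanishing at both endpoints vanishes identically, one concludes $|(u_s)_*(V)|^2 = |(u_0)_*(V)|^2 = |(u_1)_*(V)|^2$ for a.e.\ $s$ and $p$.

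The main obstacle I anticipate is the bookkeeping at the punctures: the energies $E^{u_s}$ are infinite, so the convexity argument must be run with the \emph{modified} energy $E^{u_s}(r)$ and one must pass $r \to 0$ cleanly, using that all the $u_s$ have the same logarithmic growth rate $\sum_j \Ej$ (established just before Lemma~\ref{claim:dist}) so that the subtracted logarithmic terms cancel exactly and do not interfere with the limit. One also needs to know that the integrated convexity defect over $\RR_r$ converges to the defect over $\RR$ as $r\to 0$ --- this follows because the defect is nonnegative and its integral over $\RR_r$ is uniformly bounded (being dominated by $(1-s)E^{u_0}(r)+sE^{u_1}(r) - E^{u_s}(r) \to (1-s)E^{u_0}(0)+sE^{u_1}(0)-E^{u_s}(0) = 0$), so by monotone convergence the total defect on $\RR$ is $0$. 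Everything else is a routine application of the CAT(0) comparison inequalities quoted above.
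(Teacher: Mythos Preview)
Your strategy is the paper's own: use the energy–convexity inequality for geodesic interpolations on $\RR_r$, pass to the modified energies $E^{\cdot}(r)$, let $r\to 0$, and invoke Lemma~\ref{sae} to force equality. The paper writes the convexity inequality with the extra term $-s(1-s)\int_{\RR_r}|\nabla d(u_0,u_1)|^2$ and reads off from the saturated limit both $\int_\RR|\nabla d(u_0,u_1)|^2=0$ (hence $d(u_0,u_1)\equiv c$) and $E^{u_s}(0)=E^{u_0}(0)$; your short-cut of quoting Lemma~\ref{sh01} directly for \eqref{d=1} is a harmless economy, and your observation that $d(u_s,u_1)=(1-s)c$ (so \eqref{d=1} is stated up to $s\leftrightarrow 1-s$) is correct. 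Your puncture bookkeeping via monotone convergence is exactly what is needed.

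There is, however, a genuine wrinkle in your final step for \eqref{pullback}. Your ``nonnegative convex function on $[0,1]$ vanishing at both endpoints'' argument applies to the defect $\Delta(s):=(1-s)|(u_0)_*(V)|^2+s|(u_1)_*(V)|^2-|(u_s)_*(V)|^2$; but $\Delta$ is \emph{concave} (affine minus convex), and in any case showing $\Delta\equiv 0$ only yields that $s\mapsto|(u_s)_*(V)|^2$ is \emph{affine}, not that it is \emph{constant}. ``Applying the same saturated inequality at $s$ and $1-s$'' gives nothing new, and you cannot re-run Lemma~\ref{sae} with $u_s$ in place of $u_1$ since $u_s\notin\HH_\rho$. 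The step that closes the gap (left implicit in the paper's terse ``combining this with \eqref{energyconstant}'') is the \emph{rigidity} in the quadrilateral comparison: equality in $d^2(P_s,Q_s)\le(1-s)d^2(P,Q)+sd^2(S,R)$, together with $d(P,S)=d(Q,R)=c$ from Lemma~\ref{sh01}, forces the Euclidean comparison quadrilateral to be a parallelogram, hence $d(P,Q)=d(S,R)$. Taking $P=u_0(p)$, $Q=u_0(\exp_p tV)$, $R=u_1(\exp_p tV)$, $S=u_1(p)$ and letting $t\to 0$ gives $|(u_0)_*(V)|=|(u_1)_*(V)|$ a.e., whereupon the affine relation collapses to \eqref{pullback}.
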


\begin{proof}
By the  convexity of energy (cf.~\cite[(2.2vi)]{korevaar-schoen1}), 
\[
E^{u_s}[\RR_r] \leq (1-s)E^{u_0}[\RR_r] + s E^{u_1}[\RR_r] -s(1-s) \int_{\RR_r} |\nabla d(u_0, u_1)|^2 d\mbox{vol}_{\domain}
\]
for any $r>0$.
\[
E^{u_s}(r) \leq (1-s)E^{u_0}(r) + s E^{u_1}(r) -s(1-s) \int_{\RR_r} |\nabla d(u_0, u_1)|^2 d\mbox{vol}_{\domain}.
\]
Letting $r \rightarrow 0$ and applying Lemma~\ref{sae},  we conclude 
\begin{eqnarray}
0 &= & \int_{\RR} |\nabla d(u_0, u_1)|^2 d\mbox{vol}_{\domain}
 \label{graddist}
 \\
E^{u_s}& = & E^{u_0}, \ \ \forall s \in [0,1] \label{energyconstant}
 \end{eqnarray}

First, (\ref{graddist}) implies that $\nabla d(u_0, u_1)=0$~a.e.~in $\domain$ which in turn implies $d(u_0,u_1) \equiv c$ for some $c$.  By the definition of the map $u_s$, (\ref{d=1}) follows immediately.

Next, for $\{P,Q,R,S\} \subset \tilde X$, the quadrilateral comparison for CAT(0) spaces  (cf.~\cite{reshetnyak})  implies
\[
d^2(P_t,Q_t) \leq (1-t) d^2(P,Q)+td^2(R,S)
\]
where $P_t=(1-t)P+tS$ and $Q_t=(1-t)Q+tR$.
Applying the above inequality with  $P=u_0(p)$, $Q=u_1(p)$, $S=u_1(\exp_p(tV))$ and $Q=u_0(\exp_t(tV))$ where $t >0$ and $V \in T_p\tilde M$, dividing by $y$ and letting $t \rightarrow 0$, we obtain (cf.~\cite[Theorem 1.9.6]{korevaar-schoen1})
\[
|(u_s)_*(V)|^2(p) \leq (1-s) |(u_0)_*(V)|^2(p) + s|(u_1)_*(V)|^2(p),
\ 
\mbox{a.e.~$p \in \tilde M, V \in T_p\tilde M$.}
\]   Integrating the above over all unit vectors $V \in T_p\tilde M$  and then over $p \in F$, we obtain
\[
E^{u_s} \leq (1-s) E^{u_0}+ s E^{u_1}.
\]
Combining this with  (\ref{energyconstant}) implies (\ref{pullback}).
\end{proof}

\begin{proof1.2}
 We assume there exist $u_0, u_1 \in \HH_\rho$ such that $u_0 \not \equiv u_1$ and treat the different cases of Theorem~\ref{uniqueness} separately. 
 
 \begin{itemize}
 \item {\it {\bf $\tilde X$ is a negatively curved space}}:  
For  $p \in \tilde \RR$,  choose $\kappa>0$ and $R>0$ such that $B_{R}(u_0(p))$ is a CAT(-$\kappa$) space.  Let   $\mathcal U$ be a sufficiently small neighborhood of  $p$  and $s_0 \in [0,1]$ sufficiently small such that $\tilde u_s(\mathcal U) \subset B_{R}(u(p))$ for $s \in [0,s_0]$.  Thus, if $\tilde u_{s_0}(p)\neq \tilde u_0(p)$, then applying  \cite[Section 5]{mese} implies that the image under $u_0$ of a  sufficiently small  neighborhood of $p$ is contained in an image $\sigma(\R)$ of a geodesic line.   (If $\tilde X$ is a smooth manifold, this follows by Hartman \cite{hartman}).  If $ \tilde u_0 \not \equiv \tilde u_1$, then $\tilde u_s(p) \neq \tilde u_0(p)$ for all $p \in \tilde \RR$ and $s \in (0,s_0]$ by (\ref{d=1}).  Thus, we conclude that $\tilde u_0(\tilde \RR)$ is contained in $\sigma(\R)$.  Consequently, $\rho(\pi_1(\tilde \RR))$ fixes $\sigma(\R)$ which contradicts  the fact that $\rho(\pi_1(\RR))$ is satisfies assumption (B). 

\item {\bf $\tilde X$ is an irreducible symmetric space of non-compact type or locally finite Euclidean buildings}:
For these two target spaces, the conclusion of  Lemma~\ref{twothings}  is the same as that of \cite[Lemma 3.1]{daskal-meseUnique}.  Thus, we can apply the arguments of  \cite[Section 3.2 and Section 3.3]{daskal-meseUnique}.
\end{itemize}

\end{proof1.2}

\end{document}